\theoremstyle{plain}
\newtheorem{theorem}{Theorem}%[section]
\newtheorem{lemma}[theorem]{Lemma}
\newtheorem{prop}[theorem]{Proposition}
\theoremstyle{definition}
\newtheorem{defn}[theorem]{Definition}
\theoremstyle{remark}
\newtheorem{remark}[theorem]{Remark}
\newcommand{\N}{\mathbb{N}}
\newcommand{\Z}{\mathbb{Z}}
\renewcommand{\emptyset}{\varnothing}
\newcommand{\LL}{\mathcal{L}}
\DeclareMathOperator{\Supp}{Supp}
\title[Simplicity of Leavitt path algebras]{Simplicity 
of Leavitt path algebras via graded ring theory}
\date{\today}
\begin{document}

\author{Patrik Lundstr\"{o}m}
\address{Department of Engineering Science,\\
University West, SE-46186 Trollh\"{a}ttan, Sweden}
\email{patrik.lundstrom@hv.se}

\author{Johan \"{O}inert}
\address{Department of Mathematics and Natural Sciences,\\
Blekinge Institute of Technology,
SE-37179 Karlskrona, Sweden}
\email{johan.oinert@bth.se}

\subjclass[2020]{16S88, 16W50}
\keywords{Leavitt path algebra, graded ring, simple ring, center}

\begin{abstract}
Suppose that $R$ is an associative unital
ring and that 
$E=(E^0,E^1,r,s)$ is a directed graph.
Utilizing results from graded ring theory
we show, that the associated Leavitt path algebra 
$L_R(E)$ is simple if and only if $R$ is 
simple, 
$E^0$ has no nontrivial hereditary and saturated subset, 
and 
every 
cycle 
in $E$ has an exit.
We also give a complete description of the center of a simple Leavitt path algebra.
\end{abstract}

\maketitle

%\tableofcontents

\section{Introduction}

The Leavitt path algebra of a row-finite graph, over a field, was introduced in \cite{AbramsAP2005,AraMorenoPardo}
and has since then been successively generalized (see e.g \cite{AbramsAP2008,Tomforde2011}).
The Leavitt path algebra of an arbitrary directed graph, over a unital ring, was introduced in \cite{H2013A}.
For an
%extensive 
account of the development of the field of Leavitt path algebras, we refer the reader to \cite{AbramsDecade}.
Here is our first main result.

\begin{theorem}\label{thm:main}
Suppose that $R$ is an associative unital ring 
and that $E=(E^0,E^1,r,s)$ is a directed graph.
The Leavitt path algebra $L_R(E)$ is simple if and only if 
$R$ is simple, 
$E^0$ has no nontrivial hereditary and saturated subset, 
and 
every 
cycle in $E$ has an exit.
\end{theorem}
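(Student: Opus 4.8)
The plan is to exploit the canonical $\mathbb{Z}$-grading on $L_R(E)$, in which each vertex $v \in E^0$ has degree $0$, each edge $e \in E^1$ has degree $1$, and each ghost edge $e^*$ has degree $-1$. With respect to this grading $L_R(E)$ is (nearly) epsilon-strongly $\mathbb{Z}$-graded, so the simplicity question splits naturally into a \emph{graded} part and a \emph{passage-from-graded-to-ungraded} part, and I would match the three conditions of the theorem to this split: the first two conditions will characterize graded simplicity, while the exit condition will be exactly what upgrades graded simplicity to genuine simplicity.

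First I would establish the graded dictionary: a $\mathbb{Z}$-graded ideal of $L_R(E)$ is completely determined by an ideal of $R$ together with a hereditary and saturated subset of $E^0$ (and the associated data at infinite emitters). From this correspondence it follows that $L_R(E)$ is graded simple if and only if $R$ is simple and $E^0$ has no nontrivial hereditary and saturated subset. The forward direction is immediate, since a graded ideal is in particular an ideal; for it I would note that a proper nonzero ideal of $R$, or a nontrivial hereditary and saturated $H \subseteq E^0$, manufactures a proper nonzero graded ideal. The reverse direction is the more computational one: starting from a nonzero graded ideal $I$, one uses that $I$ contains a nonzero element of $R\cdot v$ for some vertex $v$, then propagates along edges and saturates to capture every vertex and, via simplicity of $R$, all of $R$, forcing $I = L_R(E)$.

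The heart of the argument is the graded-ring-theoretic criterion for when a graded simple ring is actually simple. Here I would invoke the result (assumed from the graded ring theory developed earlier) that a graded simple, nearly epsilon-strongly $\mathbb{Z}$-graded ring $S$ is simple precisely when its center reduces to a field sitting inside the neutral component. Granting graded simplicity, I would show this criterion holds if and only if every cycle in $E$ has an exit. For the contrapositive it is cleanest to argue directly: a cycle $c$ based at a vertex $v$ with no exit gives the relations $c c^* = v = c^* c$, so $c$ is invertible in the corner $v L_R(E) v$, which is then graded isomorphic to the Laurent polynomial ring $R[x,x^{-1}]$; the latter carries the proper nonzero ideal generated by $x-1$, and the ideal of $L_R(E)$ it generates is proper and nonzero, so $L_R(E)$ is not simple. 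Conversely, when every cycle has an exit I would verify that no such Laurent corner can occur, that there is no room for central elements beyond $Z(R)$, and hence that the center collapses to a field, so that the criterion applies and $L_R(E)$ is simple; this is also precisely the point of contact with the promised description of the center.

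I expect the main obstacle to be this last passage, and specifically two points within it. The first is carrying the graded-to-ungraded criterion over an \emph{arbitrary} unital ring $R$ rather than a field, so that the reduction/uniqueness arguments and the ``center is a field'' conclusion are phrased $R$-linearly and the Laurent obstruction is $R[x,x^{-1}]$ rather than $K[x,x^{-1}]$ (noting that the center of the simple ring $R$ is already a field, which keeps the criterion consistent). The second is handling arbitrary, possibly non-row-finite, graphs uniformly, where $L_R(E)$ is non-unital and one must work with local units and the epsilon-strong structure rather than a global identity. Verifying that a cycle without an exit genuinely produces a Laurent corner, and that the absence of such cycles forces the relevant central invariant to be trivial, is where the careful bookkeeping will lie.
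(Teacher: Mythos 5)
Your overall route (a graded-simplicity dictionary followed by a graded-to-ungraded criterion phrased via the center) is the same as the paper's, and your Laurent-corner computation for a cycle without an exit is a correct variant of the paper's argument in Proposition~\ref{prop:center}. The genuine gap sits exactly at what you call the heart of the argument. The criterion you invoke --- that a graded simple, nearly epsilon-strongly $\Z$-graded ring $S$ is simple precisely when $Z(S)$ is a field contained in $S_0$ --- is false for non-unital rings, and $L_R(E)$ is non-unital exactly when $E^0$ is infinite. Indeed, a simple ring without identity has zero center (this is \cite[3.3]{Wisbauer1991}, and it is how the paper proves Theorem~\ref{thm:center}(a)), so for a simple Leavitt path algebra over an infinite graph the condition ``$Z(S)$ is a field'' fails even though $S$ is simple; in particular, under graded simplicity, ``the center is a field in $S_0$'' cannot be equivalent to Condition (L). Your closing paragraph flags non-unitality as an obstacle, but flagging it does not supply the missing device. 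The paper's device is to never apply the unital criterion (Proposition~\ref{prop:simplicity}) to $L_R(E)$ itself, but only to the \emph{unital corners} $uL_R(E)u$ for $u \in E^0$, and then to move simplicity back and forth between $L_R(E)$ and these corners via the local-unit results, Propositions~\ref{prop:simplicitylocal} and~\ref{prop:simplicitylocalagain}; accordingly, Condition (L) is characterized by $Z(uL_R(E)u) \subseteq (uL_R(E)u)_0$ for every vertex $u$ (Proposition~\ref{prop:center}), not by any condition on $Z(L_R(E))$.

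Second, even granting a corrected criterion, the implication ``every cycle has an exit $\Rightarrow$ the relevant centers are concentrated in degree zero'' is where essentially all of the technical work lies, and your proposal compresses it to ``verify that no Laurent corner can occur and that there is no room for central elements beyond $Z(R)$.'' Ruling out Laurent corners is not sufficient: one must exclude central elements of nonzero degree of arbitrary shape, not only those arising from an invertible cycle. The paper does this in the ``only if'' half of Proposition~\ref{prop:center} by a minimality argument on real degrees adapted from \cite[Thm.~3.1]{AbramsAP2008}: the map $\LL$ of Definition~\ref{def:star} (via Lemma~\ref{lem:star}) reduces the problem to negative degrees, Lemma~\ref{lem:Zinvariant} conjugates central elements into corners along edges to strip off real edges, and the exit edge $\epsilon$ finally yields the contradiction $0 \neq \epsilon\epsilon^* r_k(\delta\gamma)^* = r_k(\delta\gamma)^*\epsilon\epsilon^* = 0$. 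Without an argument of this kind (or a proof of the center description you defer to), your proof is incomplete at its central step.
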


Characterizations of simple Leavitt path algebras over fields have previously been established in e.g. \cite[Thm.~6.18]{Tomforde2007}, 
\cite[Thm.~3.1]{AbramsAP2008}
and
\cite[Thm.~3.5]{GoncalvesOinertRoyer}.
%
%Using methods from the theory of graded rings,
Theorem~\ref{thm:main} generalizes all of those results,
and also partially generalizes  \cite[Thm.~7.20]{Tomforde2011}.
Our second main result, stated below, completely describes the center of a simple Leavitt path algebra. It generalizes 
\cite[Thm.~4.2]{APCrow2011} from the case where $R$ is a field and $E$ is a row-finite graph.

\begin{theorem}\label{thm:center}
Suppose that $R$ is an associative unital ring 
and that $E=(E^0,E^1,r,s)$ is a directed graph.
Furthermore, suppose that $L_R(E)$ is a simple Leavitt path algebra. 
The following assertions hold:
\begin{enumerate}[{\rm (a)}]
    \item If
$L_R(E)$ is not unital, then $Z(L_R(E)) = \{0\}$.

    \item If $L_R(E)$ is unital, then $Z(L_R(E)) = Z(R) \cdot 1_{L_R(E)}$.
\end{enumerate}
\end{theorem}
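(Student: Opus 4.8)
The plan is to analyse how a central element interacts with the canonical $\mathbb{Z}$-grading of $L := L_R(E)$, in which the vertices and the coefficients $R$ have degree $0$, each edge $e$ has degree $1$, and each ghost edge $e^*$ has degree $-1$. First I would record the easy inclusion: since every element of $R$ commutes with all the generators $v,e,e^*$, each $\lambda\in Z(R)$ yields a central element $\lambda\cdot 1_L$ in the unital case, so $Z(R)\cdot 1_L\subseteq Z(L)$; the whole difficulty lies in the reverse inclusions. Because $\mathbb{Z}$ is abelian, comparing homogeneous components of $zx=xz$ for homogeneous $x$ shows that $Z(L)$ is a graded subring, so it suffices to treat a homogeneous central element $z$ of degree $m$.

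For such a $z$ I would extract three structural identities. Writing $z_v := vzv$, orthogonality of the vertex idempotents together with centrality gives $vzw=zvw=0$ for $v\neq w$, so, using that the finite sums of vertices form local units, $z=\sum_v z_v$ with $z_v\in vLv$. Centrality also yields the intertwining relation $z_v e = e\,z_{r(e)}$ for every edge $e$ with $s(e)=v$, and, applying the Cuntz--Krieger relation at a regular vertex $v$, the refinement $z_v=\sum_{s(e)=v} e\,z_{r(e)}\,e^*$. From these it follows immediately that $H:=\{\,v\in E^0 : z_v=0\,\}$ is hereditary (by the intertwining relation) and saturated (by the refinement). Since $L$ is simple, Theorem~\ref{thm:main} guarantees that $E^0$ has no nontrivial hereditary and saturated subset, forcing $H=\emptyset$ or $H=E^0$; hence either $z=0$ or $z_v\neq 0$ for all $v$.

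This already settles (a): if $L$ is non-unital then $E^0$ is infinite, whereas every element, $z$ included, is supported on a finite set of vertices via a local unit, so ``$z_v\neq 0$ for all $v$'' is impossible and $z=0$; thus $Z(L)=\{0\}$. Notably, no use is made here of the exit condition. For (b) the set $E^0$ is finite and I must establish two facts about a nonzero homogeneous central $z$. First, that $m=0$: if $m\neq 0$ then each $z_v$, being a nonzero combination of monomials $\alpha\beta^*$ with $|\alpha|-|\beta|=m$, forces a path of positive length out of every vertex, so the finite graph contains a cycle; invoking (via Theorem~\ref{thm:main}) that every cycle has an exit, a length-reduction argument then contradicts centrality. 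Second, in degree $0$, the same type of length-reduction, combined with the fact that $z$ commutes with $R$, shows $z_v=\lambda_v v$ with $\lambda_v\in Z(R)$. The intertwining relation then reads $\lambda_v e=\lambda_{r(e)} e$, whence $\lambda_v=\lambda_{r(e)}$ along every edge, so each level set $\{\,v:\lambda_v=c\,\}$ is again hereditary and saturated and therefore equals $E^0$; thus $\lambda_v\equiv\lambda$ is constant and $z=\lambda\sum_v v=\lambda\cdot 1_L\in Z(R)\cdot 1_L$.

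The main obstacle is the length-reduction step invoked twice in the proof of (b): ruling out nonzero-degree central elements, and collapsing the degree-$0$ diagonal blocks $z_v$ to scalars. This is precisely where the hypothesis that every cycle has an exit becomes indispensable; without it the single-vertex one-loop graph already gives $L_R(E)\cong R[x,x^{-1}]$, whose center is far larger than $Z(R)\cdot 1$. I expect this step to reduce, after compressing by suitable paths in the form $\gamma^*(\cdot)\,\delta$, to the observation that an exit produces two monomials which centrality would force to be simultaneously equal and orthogonal, yielding the required contradiction.
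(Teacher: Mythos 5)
Your overall framework is sound, and parts of it are genuinely good: the observation that $Z(L_R(E))$ is a graded subring, the decomposition $z=\sum_v z_v$ with $z_v := vzv$, the intertwining relation $z_ve = ez_{r(e)}$, and the dichotomy for $H=\{v : z_v=0\}$ via hereditary/saturated sets are all correct, and your proof of part (a) is complete and more self-contained than the paper's (which simply cites Wisbauer). The level-set argument at the end of (b) is also fine; it plays the role of the paper's induction over the hereditary saturated closure of a single vertex. The problem is that the two ``length-reduction'' steps in (b) --- ruling out nonzero degree, and collapsing each degree-zero block to $z_v=\lambda_v v$ with $\lambda_v\in Z(R)$ --- are not proofs but declarations of intent, as you yourself concede (``I expect this step to reduce\dots''). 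These are the technical heart of the theorem. Your plan for the first step (find a cycle, use its exit, compress by paths) is essentially the ``only if'' direction of the paper's Proposition~\ref{prop:center}, whose proof is a page-long minimal-real-degree argument: choose a representation of minimal real degree, split off a ghost polynomial $b$, use edges $e_i$ and an extra edge $f$ to reduce to a pure ghost form, and only \emph{then} does the exit produce the ``equal and orthogonal monomials'' contradiction you gesture at. Nothing in your sketch supplies that reduction, and it does not come for free. The second step likewise requires a genuine argument (in the paper it is Proposition~\ref{prop:tomforde}, proved by induction and imported from elsewhere). As written, part (b) is not proved.

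Moreover, your diagnosis of where the difficulty lies is off. Since simplicity of $L:=L_R(E)$ is a \emph{hypothesis} of this theorem, you do not need to route through Condition~(L) at all: the paper obtains $Z(L)\subseteq L_0$ purely from graded ring theory --- $L$ simple and unital makes $Z(L)$ a field, $Z(L)$ is graded (Lemma~\ref{lem:pregraded}(a)), and a $\Z$-graded field lives in degree zero (Lemma~\ref{lem:pregraded}(b)); this is Proposition~\ref{prop:simplicity}, (i)$\Rightarrow$(iii). Your one-loop example $R[x,x^{-1}]$ does not bear on this point, because that algebra is not simple and hence not covered by the hypothesis; cycles and exits are what is needed to \emph{prove} simplicity (Theorem~\ref{thm:main}), not to locate the center of an algebra already assumed simple. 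Concretely, your outline becomes a complete proof if you replace your first length-reduction by Proposition~\ref{prop:simplicity}, and your second by Proposition~\ref{prop:tomforde}: the latter gives $\alpha^* z\beta = kv$ for some $v\in E^0$ and nonzero $k\in R$, whence centrality and the grading force $\alpha=\beta$, $r(\alpha)=v$, $z_v=kv$ and $k\in Z(R)$ for this \emph{single} vertex $v$; your own hereditary/saturated argument applied to $\{w\in E^0 : z_w=kw\}$ then propagates this to all of $E^0$.
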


Whereas earlier proofs of Theorems~\ref{thm:main} and \ref{thm:center} 
(when $R$ is a field) utilize \emph{ad hoc}
arguments, specifically designed for graph algebras, we use the general theory of 
\emph{graded rings} to obtain our results.
This makes our proofs shorter and, we believe,
clearer. Indeed, we show that $L_R(E)$ is
graded simple if and only if $R$ is simple and
$E^0$ has no nontrivial hereditary and
saturated subset (see Proposition~\ref{prop:GradedSimple}).
We also show that every cycle
in $E$ has an exit if and only if the center
of each corner subring of $L_R(E)$ at a vertex has 
degree zero (see Proposition~\ref{prop:center}).

We point out that there are various generalizations of Leavitt path
algebras in the literature (see e.g. \cite[Sec.~5]{AbramsDecade} and \cite{ClarkEtAl2014,Steinberg2010}). A simplicity result for Steinberg algebras was obtained in
\cite{ClarkEdieMichell2015}, and when translated to Leavitt path algebras one recovers Theorem~\ref{thm:main} in the special case where $R$ is a
commutative unital ring.
Note that \cite[Thm.~4.2]{APCrow2011} was generalized to Kumjian-Pask
algebras in \cite{BrownHuef2014},
and in \cite{CMBMGSM2019}, Steinberg algebra techniques were used to give a
complete description of the center of a general (not necessarily simple)
Leavitt path algebra $L_R(E)$, where $R$ is a commutative unital ring.

\section{Simple $\Z$-graded rings}\label{sec:gradedrings}

Let $\Z$ denote the rational integers and
write $\N := \{ 1,2,3,\ldots \}$.
Suppose that $S$ is a ring. 
By this we mean that
$S$ is associative but not necessarily 
unital. If $S$ is unital, then we let $1_S$ 
denote the multiplicative identity of $S$.
Furthermore, we let $Z(S)$ denote the 
center of $S$, that is the set of all 
$s \in S$ satisfying $st = ts$ for every $t \in S$.
Recall that $S$ is said to be 
\emph{$\Z$-graded} if, for each $n \in \Z$, 
there is an additive subgroup $S_n$ of $S$
such that $S = \oplus_{n\in \Z} S_n$,
and 
$S_n S_m \subseteq S_{n+m}$,
for all $n,m\in \Z$.
In that case, each element $s\in S$ may be written as $s=\sum_{n\in \Z} s_n$, where $s_n \in S_n$ is zero for all but finitely many $n\in \Z$.
The \emph{support of $s$} is defined as the finite set $\Supp(s):=\{n\in \Z \mid s_n \neq \{0\}\}$.
An ideal $I$ of $\Z$-graded ring $S$ is said to be \emph{graded}, if $I=\oplus_{n\in \Z} (I \cap S_n)$.
If $\{0\}$ and $S$ are the only graded ideals of $S$, then $S$ is said to be \emph{graded simple}.

We recall some properties of graded rings:

\begin{lemma}\label{lem:pregraded}
Suppose that $S$ is a unital $\Z$-graded ring.
\begin{itemize}
    
    \item[(a)] The ring $Z(S)$ is $\Z$-graded with 
    respect to the grading defined by
    $Z(S)_n := Z(S) \cap S_n$,
    for $n \in \Z$.
    
    \item[(b)] If $S$ is a field, then $S = S_0$.
    
\end{itemize}
\end{lemma}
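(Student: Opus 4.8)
The plan is to treat the two parts separately, with the real work concentrated in (b). Throughout, I will use that every $s \in S$ has a unique decomposition $s = \sum_{n} s_n$ with $s_n \in S_n$, and that comparing homogeneous components of an equality in $S$ is legitimate because the sum $S = \oplus_{n} S_n$ is direct.

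For (a), the only substantive point is that the homogeneous components of a central element are themselves central; granting this, the equality $Z(S) = \oplus_{n} (Z(S) \cap S_n)$ is immediate, and the inclusion $Z(S)_n Z(S)_m \subseteq Z(S)_{n+m}$ follows from $S_n S_m \subseteq S_{n+m}$ together with the fact that a product of central elements is central. To prove the substantive point, I would fix $z = \sum_{n} z_n \in Z(S)$ and an arbitrary homogeneous element $t \in S_m$. Since $z_n t \in S_{n+m}$ and $t z_n \in S_{n+m}$, comparing the degree-$(n+m)$ components of $zt = tz$ yields $z_n t = t z_n$ for every $n$. As every element of $S$ is a finite sum of homogeneous elements, each $z_n$ commutes with all of $S$, i.e. $z_n \in Z(S) \cap S_n = Z(S)_n$.

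For (b), I would first record the standard fact that $1_S \in S_0$: writing $1_S = \sum_{n} e_n$ and comparing degree-$m$ components of $s = s \cdot 1_S$ (and of $s = 1_S \cdot s$) for homogeneous $s \in S_m$ shows that $e_0$ is a two-sided identity, so $e_0 = 1_S$ by uniqueness. Next I would show that $\Gamma := \{ n \in \Z \mid S_n \neq \{0\} \}$ is a subgroup of $\Z$. It is closed under addition because a field is a domain, so the product of two nonzero homogeneous elements is a nonzero homogeneous element of the summed degree; it contains $0$ because $1_S \in S_0$; and it is closed under negation because if $0 \neq u \in S_n$ then, comparing components of $u \, u^{-1} = 1_S \in S_0$, one finds that $u^{-1}$ is homogeneous of degree $-n$.

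It therefore remains to rule out $\Gamma = d\Z$ with $d \geq 1$, and this is the main obstacle. Assuming such a $d$, I would pick $0 \neq u \in S_d$; the negation argument shows $u$ is invertible with $u^{-1} \in S_{-d}$, whence $S_{kd} = S_0 \, u^k$ for every $k \in \Z$ and $S = \oplus_{k} S_0 u^k$. A short degree count shows that $u$ satisfies no nontrivial polynomial relation over $S_0$ (the terms $c_k u^k$ lie in distinct degrees, so each must vanish separately), so this presentation exhibits $S$ as a Laurent polynomial ring $S_0[X,X^{-1}]$. But a Laurent polynomial ring is never a field: for instance $1_S + u$ has no inverse, since solving $(1_S + u)\sum_{k} w_{kd} = 1_S$ by comparing graded components forces infinitely many nonzero $w_{kd}$, contradicting finiteness of the support. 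This contradiction gives $\Gamma = \{0\}$, i.e. $S = S_0$.
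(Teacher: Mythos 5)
Your proof is correct, and it differs from the paper in an essential way: the paper does not prove this lemma at all, but simply cites N\u{a}st\u{a}sescu--van Oystaeyen \cite{nastasescu2004} (part (a) is their p.~15, Exer.~8; part (b) is their Rem.~1.3.10). What you have written is a self-contained replacement for those citations. Your argument for (a) -- comparing the degree-$(n+m)$ components of $zt=tz$ against a homogeneous $t\in S_m$ to see that each $z_n$ is central -- is the standard one. For (b), your route (first $1_S\in S_0$; then the support $\Gamma=\{n\in\Z\mid S_n\neq\{0\}\}$ is a subgroup of $\Z$, using that a field has no zero divisors and that the inverse of a nonzero homogeneous element is homogeneous; then ruling out $\Gamma=d\Z$ with $d\geq 1$) is essentially the classical proof that a $\Z$-graded division ring with nontrivial support is a (skew) Laurent extension of $S_0$, hence never a field. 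The concluding step is sound: $1_S+u\neq 0$ by directness of the grading, so it would have an inverse $w$, and comparing degree-$kd$ components of $(1_S+u)w=1_S$ gives $w_{kd}+uw_{(k-1)d}=\delta_{k,0}1_S$; if $w_0\neq 0$ this forces $w_{kd}=(-u)^k w_0\neq 0$ for all $k\geq 1$, while if $w_0=0$ it forces $w_{-d}=u^{-1}\neq 0$ and then $w_{kd}\neq 0$ for all $k\leq -1$, so in either case $\Supp(w)$ is infinite, a contradiction. Two small observations: the identification $S\cong S_0[X,X^{-1}]$ is decorative, since your component-wise contradiction never uses it; and your argument nowhere uses commutativity, so you have actually proved the slightly stronger statement that a unital $\Z$-graded division ring is concentrated in degree zero -- which is more than the lemma asks, at the cost of a longer proof than the paper's one-line citation.
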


\begin{proof}
(a) is \cite[p.~15, Exer.~8]{nastasescu2004}
and 
(b) is \cite[Rem.~1.3.10]{nastasescu2004}. 
\end{proof}

Next, we state a special case of
\cite[Thm.~1.2]{NO2015}
and \cite[Thm.~5]{Jespers1989}. For the 
convenience of the reader, we include 
a shortened version of the proof given 
in loc.~cit. adapted 
to the situation at hand.

\begin{prop}\label{prop:simplicity}
Suppose that $S$ is a unital $\Z$-graded ring.
Then the following assertions are equivalent:
\begin{itemize}
    \item[(i)] $S$ is simple;
    
    \item[(ii)] $S$ is graded simple and 
    $Z(S)$ is a field;
    
    \item[(iii)] $S$ is graded simple and 
    $Z(S) \subseteq S_0$.
    
\end{itemize}
\end{prop}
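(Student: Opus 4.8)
The plan is to establish the cycle of implications (i) $\Rightarrow$ (ii) $\Rightarrow$ (iii) $\Rightarrow$ (i). The first implication is routine: every graded ideal is in particular an ideal, so simplicity forces graded simplicity, and it remains to see that $Z(S)$ is a field. For a nonzero $z \in Z(S)$ I would observe that, by centrality, both $zS$ and the annihilator $\{s \in S : zs = 0\}$ are two-sided ideals. The latter does not contain $1$, so simplicity makes it $\{0\}$, i.e.\ $z$ is not a zero divisor; the former is nonzero, so simplicity gives $zS = S$ and hence $zw = 1$ for some $w$. The identities $z(wt) = t = z(tw)$ combined with the absence of zero divisors then yield $w \in Z(S)$, so $z$ is invertible in $Z(S)$. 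The implication (ii) $\Rightarrow$ (iii) is a direct appeal to Lemma~\ref{lem:pregraded}: part (a) makes $Z(S)$ a $\Z$-graded ring, and since $Z(S)$ is a field, part (b) gives $Z(S) = Z(S)_0 = Z(S) \cap S_0 \subseteq S_0$.

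The substance lies in (iii) $\Rightarrow$ (i). Given a nonzero ideal $I$, I would choose $0 \neq x \in I$ whose support has minimal cardinality $k$, and set $n_1 = \min \Supp(x)$ and $a = x_{n_1} \in S_{n_1} \setminus \{0\}$. The two-sided ideal $SaS$ is graded, being generated by a homogeneous element, and it is nonzero, so graded simplicity forces $SaS = S$. Extracting the degree-zero part of an expression $1 = \sum_i u_i a v_i$ produces homogeneous elements $b_j, c_j$ with $\sum_j b_j a c_j = 1$ and $\deg b_j + n_1 + \deg c_j = 0$. The element $y := \sum_j b_j x c_j$ then lies in $I$, has degree-zero component equal to $1$, and has support contained in the translate $\Supp(x) - n_1$; since $y \neq 0$, minimality forces $\Supp(y) = \Supp(x) - n_1$ with $y_0 = 1$.

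The decisive step is a commutator argument: for any homogeneous $s$, the element $sy - ys$ lies in $I$ and its component in the degree of $s$ vanishes, because $y_0 = 1$ commutes with $s$. Hence $\Supp(sy - ys)$ has cardinality at most $k - 1$, and minimality forces $sy - ys = 0$. Thus $y$ commutes with every homogeneous element, so $y \in Z(S)$; by hypothesis (iii) this puts $y \in S_0$, whence $\Supp(y) = \{0\}$ and $y = 1$, so $1 \in I$ and $I = S$. I expect the main obstacle to be exactly this direction, and within it the two normalizations: using $SaS = S$ to arrange that the minimal-support element has $1$ as its degree-zero component, and then checking that the commutator genuinely lowers the support below the minimum. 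Everything else reduces to standard ring theory or to Lemma~\ref{lem:pregraded}.
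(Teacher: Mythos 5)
Your proposal is correct and follows essentially the same route as the paper: choose a nonzero element of the ideal with minimal support, use graded simplicity to normalize its degree-zero component to $1$, then use the commutator argument to shrink the support below the minimum and conclude the element lies in $Z(S)\subseteq S_0$, hence equals $1$. The only differences are cosmetic — you spell out the standard argument for (i)$\Rightarrow$(ii) (which the paper dismisses as clear) and explicitly extract the degree-zero part of $1\in SaS$, where the paper simply asserts the existence of homogeneous $p_i,q_i$ with $p_i s_m q_i\in S_0\setminus\{0\}$ summing to $1$.
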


\begin{proof}
(i)$\Rightarrow$(ii) is 
clear, and (ii)$\Rightarrow$(iii) follows from 
Lemma~\ref{lem:pregraded}.
Now, we show that (iii)$\Rightarrow$(i).
Suppose that $S$ is graded simple and that
$Z(S) \subseteq S_0$. 
Let $I$ be a nonzero ideal
of $S$. We wish to show that $1_S \in I$.
Amongst all nonzero elements of $I$, choose $s$ such that $|\Supp(s)|$ is minimal. 
Take $m \in \Supp(s)$. 
Since $S$ is graded simple, there are $n \in \N$
and homogeneous elements $p_1,\ldots,p_n,q_1,\ldots,q_n \in S$, 
such that 
$\sum_{i=1}^n p_i s_m q_i = 1_S$, and
 $p_i s_m q_i \in S_0 \setminus \{0\}$ for every $i\in \{1,\ldots,n\}$.
Write $t := \sum_{i=1}^n p_i s q_i$.
Note that $t \in I$, $t_0 = 1_S$ and 
$|\Supp(t)| \leq |\Supp(s)|$.
Take $z \in \Z$ and $x \in S_z$.
Then, $tx-xt \in I$ and,
since $t_0 = 1_S$, it follows that
$|\Supp(tx - xt)| < |\Supp(t)|$.
By the assumptions on $s$ we get
$|\Supp(tx - xt)| = 0$ and hence that
$xt = tx$. Thus, $t \in Z(S) \subseteq S_0$.
We conclude that $1_S = t_0 = t \in I$.
\end{proof}

Let $S$ be a ring.
Recall from \cite{anh1987} 
(see also \cite{nystedt2019})
that a set $U$ of idempotents in $S$ is
called a
\emph{set of local units} for $S$, if
for every $n \in \N$ and all 
$s_1,\ldots,s_n \in S$ there is some $e \in U$
such that $e s_i = s_i e = s_i$, for every $i \in \{1,\ldots,n\}$. 

\begin{remark}
Suppose that $S$ is a $\Z$-graded ring.
If $e\in S_0$ is an idempotent, then the corner subring $eSe$ inherits a natural $\Z$-grading defined by $(eSe)_n:= e S_n e$, for $n\in \Z$.
\end{remark}

For future reference, we 
recall the following two results:

\begin{prop}\label{prop:simplicitylocal}
Suppose that $S$ is a $\Z$-graded ring 
equipped with a set of local units $U \subseteq S_0$. Then, $S$ is (graded)
simple if and only if, for every $f \in U$, the ring $f S f$ is (graded) simple.
\end{prop}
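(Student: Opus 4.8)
The plan is to prove both directions by relating two-sided ideals of $S$ to two-sided ideals of its corners $fSf$, treating the simple and graded-simple cases in parallel (inserting the word ``graded'' and restricting to homogeneous elements where appropriate). Throughout I will use that a ring with a set of local units is idempotent, so that for any two-sided ideal $J$ of a corner $fSf$ the two-sided ideal of $S$ that $J$ generates is simply $SJS$, and that since $f \in S_0$ the corner $fSf$ carries the induced grading $(fSf)_n = fS_nf$ from the preceding remark.

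For the forward implication, assume $S$ is (graded) simple and let $J$ be a nonzero (graded) two-sided ideal of $fSf$. Since any nonzero $j \in J$ satisfies $j = fjf \in SJS$, the set $SJS$ is a nonzero (graded) ideal of $S$, hence $SJS = S$ by (graded) simplicity. The key computation is then that $f(SJS)f \subseteq J$: writing a generator as $fsjs'f$ with $j \in J \subseteq fSf$, so that $j = fjf$, one has $fsjs'f = (fsf)\,j\,(fs'f) \in (fSf)\,J\,(fSf) \subseteq J$, and this extends by linearity. Consequently $fSf = f(SJS)f \subseteq J \subseteq fSf$, so $J = fSf$ and $fSf$ is (graded) simple.

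For the reverse implication, assume every corner $fSf$ is (graded) simple and let $I$ be a nonzero (graded) two-sided ideal of $S$. Pick a nonzero element $y \in I$ (homogeneous, in the graded case, which is possible since $I$ is then a graded ideal) and a local unit $f \in U$ with $fy = yf = y$; then $y = fyf$ lies in the nonzero (graded) ideal $I \cap fSf$ of $fSf$. The verification that $I \cap fSf$ is an ideal is routine, and its gradedness in the graded case follows because $f \in S_0$ forces each homogeneous component $w_n = fw_nf$ of an element $w \in I \cap fSf$ to lie again in $I \cap fSf$. By (graded) simplicity of $fSf$ we obtain $I \cap fSf = fSf$, and in particular $f \in I$.

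It remains to upgrade $f \in I$ to $I = S$, and this is the step I expect to be the main obstacle, since $f$ is a local unit only for finitely many prescribed elements and a priori says nothing about elements far from its corner. The device is to pass through a common corner: given an arbitrary $s \in S$, choose a local unit $e \in U$ simultaneously for $s$ and $f$, so that $ese = s$ and $efe = f$. Then $f = efe$ is a nonzero element of the (graded) ideal $I \cap eSe$ of the (graded) simple ring $eSe$, whence $I \cap eSe = eSe \ni ese = s$. Therefore $s \in I$; as $s$ was arbitrary, $I = S$ and $S$ is (graded) simple.
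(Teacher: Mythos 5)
Your proof is correct and follows essentially the same route as the paper: the forward direction is the identical computation $fSf = f(SJS)f = (fSf)J(fSf) \subseteq J$, and the reverse direction rests on the same device of a common local unit for two elements together with simplicity of the corner. The only difference is organizational: the paper does the reverse direction in one step by choosing $f$ to be a local unit for both the arbitrary element $x$ and the ideal element $y$ at once (using the corner ideal $fSySf$), whereas you first extract $f \in I$ via $I \cap fSf$ and then upgrade through a larger corner $eSe$ --- both are valid instances of the same idea.
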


\begin{proof}
First we show the ``only if''
statement. Suppose that $S$ is (graded) simple 
and that $f \in U$. 
Let $J$ be a nonzero (graded) ideal of $f S f$.
By (graded) simplicity of $S$, it follows 
that $SJS = S$.
Thus, $fSf = fSJSf = (fSf)J(fSf) \subseteq J$
and hence $J = fSf$.
Now, we show the ``if''
statement. Suppose that $fSf$ is (graded) simple
for every $f \in U$. Let $I$ be a nonzero 
(graded) ideal
of $S$. Take a nonzero (homogeneous) $x \in S$. 
Take a nonzero (homogeneous) $y \in I$ and $f \in U$
with $fx = xf = x$ and $fy = yf = y$. 
By (graded) simplicity of $fSf$
it follows that $I \supseteq f SyS f = fSf \ni x$.
Thus, $I = S$.
\end{proof}

\begin{prop}\label{prop:simplicitylocalagain}
Suppose that $S$ is a $\Z$-graded ring 
equipped with a set of local units
and that
$f \in S_0$ is a nonzero idempotent.
If $S$ is graded simple and $f S f$ is 
simple, then $S$ is simple.
\end{prop}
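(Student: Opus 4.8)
The plan is to show directly that every nonzero two-sided ideal $I$ of $S$ satisfies $I = S$. The argument rests on a single claim, namely that $fIf \neq \{0\}$ for every such $I$; everything else is then quick. First I would record that, since $f \in S_0$ is a nonzero \emph{homogeneous} element, the two-sided ideal it generates coincides with $SfS$ (using a local unit $e$ with $ef = fe = f$ to write $f = efe$, so that $f$, and likewise $Sf$, $fS$ and $\Z f$, all lie in $SfS$); this ideal is graded, so graded simplicity of $S$ forces $SfS = S$.

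Granting the claim, the conclusion follows as follows. The set $fIf$ is a nonzero ideal of the corner ring $fSf$, so by simplicity of $fSf$ we get $fIf = fSf$. Since $f = fef \in fSf = fIf \subseteq I$, we obtain $f \in I$, and therefore $S = SfS \subseteq SIS \subseteq I$, i.e. $I = S$.

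The hard part will be the claim $fIf \neq \{0\}$, which I would prove by contradiction. Assume $fIf = \{0\}$. Exploiting $SfS = S$ once more, and the fact that for $x \in I$ any ``inner'' product $s_2 x s_3$ stays in $I$, I would compute
\[
SIS = (SfS)\,I\,(SfS) = S\,f\,(SIS)\,f\,S \subseteq S\,(fIf)\,S = \{0\}.
\]
The presence of local units now delivers the contradiction: for any $x \in I$ choose a local unit $e$ with $ex = xe = x$; then $x = exe \in SIS = \{0\}$, so $x = 0$. This forces $I = \{0\}$, contradicting the choice of $I$, and establishes the claim. I expect the only points needing care to be the verification that $fIf$ is genuinely an ideal of $fSf$ and that $SfS$ really is the ideal generated by $f$ (so that graded simplicity applies), both of which reduce to routine manipulations with the local units.
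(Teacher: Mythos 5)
Your proof is correct, and its key step is genuinely different from the paper's. Both arguments share the same endgame: once the corner of $I$ is known to be nonzero, simplicity of $fSf$ yields $f \in I$, and then $I \supseteq SfS = S$ by graded simplicity. The difference is in how that nonzero-ness is obtained. The paper argues homogeneously: it fixes a nonzero $s \in I$ and a component $s_m$ with $m \in \Supp(s)$, applies graded simplicity to the graded ideal $S s_m S$ to produce homogeneous $y,z$ with $f y s_m z f \neq 0$ and $\deg(y)+\deg(z) = -m$, and then observes that $s' := f y s z f$ lies in $I \cap fSf$ and is nonzero because its degree-zero component is exactly $f y s_m z f$ --- the degree bookkeeping is precisely what rules out cancellation among the components of $s$. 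You instead invoke the grading only once, to show that $f$ is a \emph{full} idempotent ($SfS = S$, valid since $SfS$ is a nonzero graded ideal), and then run a purely ungraded, Morita-style argument: with local units every ideal satisfies $I = SIS$, so $fIf = \{0\}$ would force $I = SIS = Sf(SIS)fS \subseteq S(fIf)S = \{0\}$, a contradiction; hence $fIf$ is a nonzero ideal of the simple ring $fSf$, giving $f \in fIf \subseteq I$. Your route buys a clean separation of concerns --- a grading-free lemma (in a ring with local units, a full idempotent with simple corner forces simplicity of the whole ring) plus the single graded input $SfS = S$ --- and it entirely avoids the paper's manipulation of homogeneous components and degrees; the paper's route, in exchange, stays within the homogeneous-compression technique used throughout its Section~\ref{sec:gradedrings} and exhibits an explicit nonzero element of $I \cap fSf$. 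Both are complete; yours is arguably the shorter and more conceptual of the two.
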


\begin{proof}
Suppose that $S$ is graded simple and that $f S f$ is simple.
Let $I$ be a nonzero ideal of $S$. 
Take a nonzero $s \in I$ and write
$s = \sum_{n \in \Supp(s)} s_n$.
Fix $m \in \Supp(s)$ and define $J := S s_m S$.
Then, $J$ is a nonzero graded ideal of $S$.
By graded simplicity of $S$, it follows that
$J = S$ and, in particular, that $f \in J$.
Note that $f \in fJf$.
Using that $f \neq 0$, it follows
that there exist nonzero homogeneous
$y,z \in S$ such that $fy s_m zf$ is
nonzero and $\deg(y)+\deg(z) = -m$.
Now, define $s' := fy s zf$. By the construction
of $s'$, it follows that $s' \in I \cap fSf$
and that $s'$ is nonzero. In particular,
$I \cap fSf \neq \{ 0 \}$. Hence, by simplicity of $fSf$, we get that $I \cap fSf = fSf$.
Thus, $f \in I$. 
Note that $SfS$ is a nonzero graded ideal of $S$. Hence, by graded simplicity of $S$, we get that
$I \supseteq SfS = S$. This shows that $I = S$.
\end{proof}

\section{Simple Leavitt path
algebras}\label{sec:LPA}

Let $R$ be an associative unital ring and let
$E = (E^0,E^1,r,s)$ be a directed graph.
Recall that 
$r$ (range) and $s$ (source) are maps $E^1 \to E^0$. The elements of $E^0$ are called \emph{vertices} 
and the elements of $E^1$ are called \emph{edges}. 
The elements of $E^1$ are called \emph{real edges}, while for $f\in E^1$ 
we call $f^*$ a \emph{ghost edge}.
The set $\{f^* \mid f \in E^1\}$ will be denoted by $(E^1)^*$.
A \emph{path} $\mu$ in $E$ is a sequence of edges 
$\mu = \mu_1 \ldots \mu_n$ such that $r(\mu_i)=s(\mu_{i+1})$ 
for $i\in \{1,\ldots,n-1\}$. In that case, $s(\mu):=s(\mu_1)$ 
is the \emph{source} of $\mu$, $r(\mu):=r(\mu_n)$ is the \emph{range} 
of $\mu$, 
and $|\mu|:=n$ is the \emph{length} of $\mu$.
If $\mu=\mu_1\ldots \mu_n$ is a (real) path in $E$, then we let $\mu^*:=\mu_n^*\ldots \mu_1^*$ denote the corresponding \emph{ghost path}.
For any vertex $v \in E^0$ we put $s(v):=v$ and $r(v):=v$.
We let $r(f^*)$ denote $s(f)$, and we let $s(f^*)$ denote $r(f)$.
For $n \geq 2$, we define $E^n$ to be the set of paths of length $n$, and
$E^* := \cup_{n\geq 0} E^n$ is the set of all finite paths.

Following Hazrat \cite{H2013A} we make the following definition.

\begin{defn}\label{def:LPA}
The \emph{Leavitt path algebra of $E$ with coefficients in $R$}, denoted by $L_R(E)$,
is the algebra generated by the sets
$\{v \mid v\in E^0\}$, $\{f \mid f\in E^1\}$ and $\{f^* \mid f\in E^1\}$
with the coefficients in $R$,
subject to the relations:
\begin{enumerate}
	\item $uv = \delta_{u,v} v$ for all $u,v \in E^0$;
	\item $s(f)f=fr(f)=f$ and $r(f)f^*=f^*s(f)=f^*$, for all $f\in E^1$;
	\item $f^*f'=\delta_{f,f'} r(f)$, for all $f,f'\in E^1$;
	\item $\sum_{f\in E^1, s(f)=v} ff^* = v$, for every $v\in E^0$ for which $s^{-1}(v)$ is non-empty and finite.
\end{enumerate}
Here elements of the ring $R$ commutes with the generators.
\end{defn}

\begin{remark}\label{rem:Zgradation}
(a)
The Leavitt path algebra $L_R(E)$ carries a natural $\Z$-gradation.
Indeed, put $\deg(v):=0$ for each $v\in E^0$.
For each $f\in E^1$ we put $\deg(f):=1$ and $\deg(f^*):=-1$.
By assigning degrees to the generators in this way,
we obtain a $\Z$-gradation on the free algebra $F_R(E) = R \langle	v, f, f^* \mid v \in E^0, f \in E^1 \rangle$.
Moreover, the ideal coming from relations (1)--(4) in Definition~\ref{def:LPA} is
graded. 
Using this it is easy to see that the natural $\Z$-gradation on $F_R(E)$ carries over to a $\Z$-gradation on the quotient algebra $L_R(E)$.

(b)
The set $\left\{\sum_{v\in F} v \mid F \text{ is a finite subset of } E^0\right\}$
is a set of local units for $L_R(E)$.
If $E^0$ is finite,
then $L_R(E)$ is unital, and $1_{L_R(E)}=\sum_{v\in E^0} v$.

(c) Motivated by Definition~\ref{def:LPA} (2),
for $u\in E^0$, we write $u^*:=u$.
\end{remark}

\begin{defn}
Let $E=(E^0,E^1,r,s)$ be a directed graph.
A subset $H \subseteq E^0$ is said to be \emph{hereditary}
if, for any $f\in E^1$, we have that $s(f) \in H$
implies $r(f) \in H$.
A hereditary subset $H \subseteq E^0$ is called \emph{saturated} if,
whenever $v\in E^0$ satisfies $0 < |s^{-1}(v)| < \infty$,
we have that $\{r(f) \in H \mid f \in E^1 \text{ and } s(f)=v \} \subseteq H$ implies $v\in H$.
\end{defn}

\begin{remark}
Note that $\emptyset$ and $E^0$ are always hereditary and saturated subsets of $E^0$. They are referred to as \emph{trivial}.
\end{remark}

\begin{prop}\label{prop:GradedSimple}
The Leavitt path algebra $L_R(E)$ is graded simple if and
only if $R$ is simple and 
$E^0$ has no nontrivial hereditary and saturated subset.
\end{prop}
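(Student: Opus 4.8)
The plan is to exploit the fact that $L_R(E)$ is a free left $R$-module on the monomials $\mu\nu^*$ (with $r(\mu)=r(\nu)$), a normal form inherited from \cite{H2013A}, so that for each vertex $v$ the ``coefficient of $v$'' of an element is a well-defined element of $R$; I will also use that the image of $R$ is central in $L_R(E)$, since by Definition~\ref{def:LPA} the coefficients commute with all generators. Both implications are then proved by producing, from a hypothetical obstruction, a proper nonzero graded ideal, and conversely by collapsing an arbitrary nonzero graded ideal down to a single vertex.

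For the forward implication I argue contrapositively. If $R$ admits a nonzero proper two-sided ideal $J$, then, using centrality of $R$, the set $J\cdot L_R(E)$ of all elements whose monomial coefficients lie in $J$ is a two-sided ideal; it is graded because $J$ acts componentwise on each homogeneous part, it is nonzero, and it is proper, since the coefficient of any vertex $v$ occurring in one of its elements lies in $J$ whereas $v=1_R\cdot v$ has coefficient $1_R\notin J$. Similarly, if $H\subseteq E^0$ is a nontrivial hereditary and saturated subset, I consider the ideal $I(H)$ generated by $\{v\mid v\in H\}$; it is graded and nonzero, and a standard computation (using hereditariness to keep ranges inside $H$ under multiplication, and saturation together with relation~(4) to handle the Cuntz--Krieger expansions) shows that $I(H)$ is $R$-spanned by monomials $\mu\nu^*$ with $r(\mu)=r(\nu)\in H$. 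Hence the coefficient of any $w\notin H$ vanishes, so $I(H)$ is proper. Either obstruction contradicts graded simplicity.

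For the converse, assume $R$ is simple and $E^0$ has no nontrivial hereditary and saturated subset, and let $I$ be a nonzero graded ideal. I first set $H:=\{v\in E^0\mid v\in I\}$ and verify it is hereditary and saturated: relations~(2) and~(3) give $r(f)=f^*\bigl(s(f)f\bigr)\in I$ whenever $s(f)\in I$, while relation~(4) gives $v=\sum_{s(f)=v} f f^*\in I$ whenever all such $r(f)$ lie in $I$, since $ff^*=f\,r(f)\,f^*$. The heart of the argument is a reduction lemma: every nonzero graded ideal contains a nonzero element of the form $r\cdot v$ with $r\in R$ and $v\in E^0$. Granting this, the set $J_v:=\{r\in R\mid rv\in I\}$ is, by centrality of $R$, a nonzero two-sided ideal of $R$, hence equals $R$ by simplicity, so $v=1_R\cdot v\in I$ and $H\neq\emptyset$. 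Since $H$ is hereditary and saturated, the hypothesis forces $H=E^0$; as every monomial satisfies $\mu\nu^*=s(\mu)\,\mu\nu^*$, it follows that $I=L_R(E)$.

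It remains to establish the reduction lemma, which I expect to be the main obstacle. Starting from a nonzero homogeneous $x=\sum_i r_i\mu_i\nu_i^*$ of degree $d$ (written with distinct monomials and nonzero coefficients), I will first lower the degree to zero by repeatedly left-multiplying by ghost edges when $d>0$ and by real edges when $d<0$, chosen along a term of maximal length so that the relation $f^*f'=\delta_{f,f'}r(f)$ does not annihilate everything. Once $x$ is homogeneous of degree zero, a symmetric left/right collapse by a real path and a ghost path of maximal length reduces it to an element supported on finitely many vertices, $\sum_j r_j' v_j$, because a path of degree zero is a vertex; sandwiching by a single vertex $v$ then isolates $r'\cdot v$, using $v v_j v=\delta_{v,v_j}v$. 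The delicate point throughout is guaranteeing that the chosen maximal-length term survives rather than cancelling against the others, and this is exactly where the uniqueness of the monomial normal form from \cite{H2013A} is invoked.
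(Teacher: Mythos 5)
Your overall architecture is close to the paper's: you use $J\cdot L_R(E)$ to detect non-simplicity of $R$, an ideal attached to a nontrivial hereditary saturated set $H$ for the other obstruction, and in the converse you pass from a nonzero graded ideal $I$ to the set $H=\{v\in E^0\mid v\in I\}$ and verify it is hereditary and saturated, exactly as in the paper. However, the proposal rests on a false structural premise, and you invoke it at precisely the load-bearing steps. It is \emph{not} true that $L_R(E)$ is a free left $R$-module on the monomials $\mu\nu^*$ with $r(\mu)=r(\nu)$: relation (4) of Definition~\ref{def:LPA} is itself an $R$-linear dependence among such monomials. For instance, if $v$ emits a single edge $f$ then $v=ff^*$, and for a single loop $e$ at $v$ one has $v=ee^*=e^*e$, so $e^m(e^*)^n$ depends only on $m-n$. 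Consequently ``the coefficient of a vertex'' of an element is not well defined, and no such normal form is asserted in \cite{H2013A} (the known monomial bases must exclude, for each regular vertex, monomials ending in a chosen special edge). This breaks all three of your key steps: (i) properness of $J\cdot L_R(E)$ --- the correct argument maps $L_R(E)$ onto $L_{R/J}(E)$, where vertices remain nonzero by (the analogue over $R/J$ of) Lemma~\ref{lem:independent}; (ii) properness of $I(H)$ --- your spanning claim is true but by itself proves nothing, since a vertex $w\notin H$ could a priori still equal a combination of monomials with ranges in $H$; the paper settles this with the graded homomorphism $\Psi:L_R(E)\to L_R(F)$ onto the Leavitt path algebra of the quotient graph (well-definedness of $\Psi$ is exactly where saturation of $H$ enters, via relation (4)), so that $\ker\Psi$ is a proper graded ideal containing $H$; and (iii) your ``reduction lemma'' --- the survival of a maximal-length term is precisely the cancellation problem that relations (3) and (4) create, and it cannot be dismissed by appeal to normal-form uniqueness; the paper instead quotes \cite[Lem.~5.1]{Tomforde2011} for the existence of a vertex multiple $kv$ in any nonzero graded ideal (and, relatedly, Proposition~\ref{prop:tomforde} for collapsing degree-zero elements), whose proofs are genuine inductions using the graph relations.

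In short, the skeleton of your argument is sound and essentially the paper's, but every step where real work is required is justified by a freeness statement that fails whenever $E$ has a regular vertex, i.e.\ in all interesting cases. To repair the proof you would replace those appeals by: nondegeneracy of $L_{R/J}(E)$ for (i), the quotient-graph homomorphism $\Psi$ for (ii), and the reduction results of \cite{Tomforde2011} for (iii) --- which is precisely what the paper's proof does.
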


\begin{proof}
First we show the ``if'' statement.
Suppose that $R$ is simple and that $E^0$ has no  
nontrivial
hereditary and saturated subset.
Let $I$ be a nonzero graded ideal of $L_R(E)$.
Consider the set $H_I := \{ v \in E^0 \mid k v \in I 
\text{ for some nonzero } k \in R \}$.
By the same argument as in \cite[Lem.~5.1]{Tomforde2011}, 
$H_I$
is nonempty. Furthermore, since $R$ is simple, it follows that
$H_I = \{ v \in E^0 \mid v \in I \}$.
We wish to show that $H_I$ is
hereditary and saturated. To this end, take 
$v\in H_I$. 
Suppose that $e \in E^1$ with $s(e) = v$.
Then, $r(e) = e^* e = e^* v e \in I$. 
Thus, $H_I$ is hereditary.
Now, take 
$v\in E^0$ such that $0 < |s^{-1}(v)| < \infty$,
and suppose that $r(s^{-1}(v)) \subseteq H_I$.
For each $e \in s^{-1}(v)$ we have $r(e) \in H_I$
and hence $e e^* = e r(e) e^* \in I$. Thus, 
$v = \sum_{e \in s^{-1}(v)} ee^* \in I$ and $v \in H_I$.
Therefore, $H_I$ is saturated. 
By assumption, we get that $H_I = E^0$.
This shows that $I$ must contain all the local units 
of $L_R(E)$ and thus $I = L_R(E)$. Hence, $L_R(E)$ is 
graded simple.

Now, we show the ``only if'' statement.
Suppose that $L_R(E)$ is graded simple.
Let $J$ be a nonzero ideal of $R$.
Note that $J \cdot L_R(E)$ is a nonzero graded ideal of $L_R(E)$.
Thus, $J \cdot L_R(E) = L_R(E)$ and we conclude that $J=R$. This shows that $R$ is simple.

Let $H$ be a proper hereditary and saturated subset of $E^0$. 
Following \cite{AbramsAP2005,AbramsAP2008}, we 
let $F:=(F^0,F^1,r,s)$ be the graph consisting of all
vertices not in $H$ and all egdes whose
range is not in $H$. 
For $v \in E^0$, define $\Psi(v) := v$, if $v \in F^0$, and
$\Psi(v) := 0$, otherwise. For $e \in E^1$, define $\Psi(e) := e$, if $e \in F^1$, and
$\Psi(e) := 0$, otherwise. Furthermore, define
$\Psi(e^*) := e^*$, if $e^* \in (F^1)^*$,
and $\Psi(e^*) := 0$, otherwise. The argument
in loc. cit. shows that this yields a well-defined ring homomorphism 
$\Psi : L_R(E) \to L_R(F)$. Clearly, $\Psi$ is
\emph{graded}. Thus, the ideal 
$I := \ker(\Psi)$ of $L_R(E)$ is  
\emph{graded}.
Note that $F^0$ is nonempty, because $H$ is proper, and hence $I \neq L_R(E)$.
By assumption, we get that
$I=\{0\}$.
By the construction of $\Psi$ it follows
that $H \subseteq I$. 
Thus, $H=\emptyset$.
\end{proof}

\begin{defn}\label{def:star}
Define an additive map $\LL : L_R(E) \to L_R(E)$ by requiring that 
$\LL(\lambda \alpha \beta^*) = 
\lambda \beta \alpha^*$, for all
$\lambda \in R$, and $\alpha,\beta \in E^*$.
\end{defn}

\begin{remark}
The map $\LL$ is an 
isomorphism of additive groups such that
$\LL((L_R(E))_N) = (L_R(E))_{-N}$ for every
$N \in \Z$.
\end{remark}

\begin{lemma}\label{lem:star}
Suppose that $u \in E^0$.
The map $\LL$ 
restricts to an isomorphism of additive groups 
$\LL\lvert_{Z(u L_R(E) u)} : Z(u L_R(E) u) \to Z(u L_R(E) u)$.
In particular, the equality 
$\LL((Z(u L_R(E) u))_N) =
(Z(u L_R(E) u))_{-N}$ holds for every $N \in \Z$.
\end{lemma}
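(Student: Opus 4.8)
The plan is to establish two things: first, that $\LL$ maps the corner subring $u L_R(E) u$ into itself, and second, that $\LL$ sends the center of this corner subring into its own center. Since $\LL$ is already known (by the preceding Remark) to be an additive isomorphism of the whole algebra satisfying $\LL \circ \LL = \id$ and $\LL((L_R(E))_N) = (L_R(E))_{-N}$, the involutive property gives bijectivity for free once I show $\LL$ preserves $Z(u L_R(E) u)$; the ``in particular'' grading statement then follows immediately from the degree-reversing property restricted to the center.

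\textbf{Step 1: $\LL$ preserves the corner $u L_R(E) u$.} I would first observe that $\LL$ fixes each vertex: since $u^* = u$ by Remark~\ref{rem:Zgradation}(c), and $u = u\,u\,u^*$ in the monomial form $\lambda \alpha \beta^*$ with $\alpha = \beta = u$, we get $\LL(u) = u$. More usefully, $\LL$ is an \emph{anti}-automorphism: for monomials one checks $\LL(\lambda \alpha \beta^* \cdot \mu \gamma \delta^*) = \LL(\mu \gamma \delta^*)\LL(\lambda \alpha \beta^*)$, because reversing a product of paths reverses the order. Granting this, for any $x \in u L_R(E) u$, write $x = uxu$ and compute $\LL(x) = \LL(uxu) = \LL(u)\LL(x)\LL(u) = u\,\LL(x)\,u \in u L_R(E) u$. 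So $\LL$ restricts to an additive bijection of the corner (bijectivity because $\LL^2 = \id$).

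\textbf{Step 2: $\LL$ preserves the center of the corner.} Let $z \in Z(u L_R(E) u)$ and let $y \in u L_R(E) u$ be arbitrary. I want $\LL(z)\, y = y\, \LL(z)$. Using that $\LL$ is an involutive anti-automorphism of the corner, apply $\LL$ to both sides: $\LL(\LL(z)\,y) = \LL(y)\,z$ and $\LL(y\,\LL(z)) = z\,\LL(y)$. Since $z$ is central in the corner and $\LL(y)$ lies in the corner by Step~1, we have $z\,\LL(y) = \LL(y)\,z$, so the two images agree; applying the injective map $\LL$ once more yields $\LL(z)\,y = y\,\LL(z)$. Hence $\LL(z) \in Z(u L_R(E) u)$, giving $\LL(Z(u L_R(E) u)) \subseteq Z(u L_R(E) u)$, and the reverse inclusion follows from $\LL^2 = \id$.

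\textbf{Step 3: the grading identity.} By Lemma~\ref{lem:pregraded}(a) the center $Z(u L_R(E) u)$ is a graded subring of the graded corner, with homogeneous components $(Z(u L_R(E) u))_N = Z(u L_R(E) u) \cap (u L_R(E) u)_N$. Since $\LL$ reverses degree, it carries $(u L_R(E) u)_N$ onto $(u L_R(E) u)_{-N}$, and combined with Step~2 this gives $\LL((Z(u L_R(E) u))_N) = (Z(u L_R(E) u))_{-N}$.

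\textbf{The main obstacle} I anticipate is verifying cleanly that $\LL$ is a ring \emph{anti}-homomorphism rather than a homomorphism, and in particular that it is well-defined and multiplicative on the quotient $L_R(E)$ (not merely on the free algebra), i.e.\ that it respects the Leavitt relations (1)--(4). The degree-reversal in the preceding Remark already signals the anti-multiplicativity, so the real care is in checking compatibility with relation (3), $f^* f' = \delta_{f,f'} r(f)$, and the CK2 relation (4), under the path-reversal $\alpha\beta^* \mapsto \beta\alpha^*$. Once that algebraic bookkeeping is in place, Steps~1--3 are formal consequences of $\LL$ being an involutive degree-reversing anti-automorphism.
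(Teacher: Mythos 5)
There is a genuine gap: your argument rests on the claim that $\LL$ is a ring anti-automorphism of $L_R(E)$, i.e.\ $\LL(ab)=\LL(b)\LL(a)$, and this is \emph{false} when the coefficient ring $R$ is noncommutative --- which is precisely the generality of this paper. Since coefficients commute with the generators and $\LL$ fixes them, one gets $\LL(\lambda\alpha\beta^*\cdot\mu\gamma\delta^*)=\lambda\mu\,\LL(\alpha\beta^*\gamma\delta^*)$, whereas $\LL(\mu\gamma\delta^*)\LL(\lambda\alpha\beta^*)$ carries the coefficient $\mu\lambda$; these differ whenever $\lambda\mu\neq\mu\lambda$. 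A degenerate example makes this concrete: for the graph with one vertex and no edges, $L_R(E)\cong R$ and $\LL=\id$, which is an anti-automorphism if and only if $R$ is commutative. What is true is that $\LL$ is a ring isomorphism $L_R(E)^{\mathrm{op}}\to L_{R^{\mathrm{op}}}(E)$, but then your Step 2 would only place $\LL(z)$ in the center of the corner of the \emph{wrong} algebra, and identifying $Z(uL_{R^{\mathrm{op}}}(E)u)$ with $Z(uL_R(E)u)$ requires exactly the kind of coefficient-sensitive argument you were trying to bypass. Consequently Step 2 --- which applies $\LL$ to $\LL(z)y$ and $y\LL(z)$ and invokes anti-multiplicativity --- collapses; as written, your proof is valid only for commutative $R$.

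The paper's proof is designed around this exact difficulty. For $x=\sum_j\lambda_j\alpha_j\beta_j^*\in Z(uL_R(E)u)$ it never applies $\LL$ to a product involving general coefficients; instead it applies the merely \emph{additive} map $\LL$ to the two families of relations expressing centrality of $x$ --- commutation with $ru$ for $r\in R$, and commutation with coefficient-free monomials $\gamma\delta^*$ --- each of which transforms into the corresponding relation for $\LL(x)$ (the only instances of order-reversal used involve coefficient $1$, where they do hold), and then combines them via $\LL(x)\,r\delta\gamma^*=\LL(x)\,ru\,\delta\gamma^*=ru\,\LL(x)\,\delta\gamma^*=\cdots=r\delta\gamma^*\,\LL(x)$. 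If $R$ were assumed commutative, your Steps 1--3 would be correct and arguably cleaner; over an arbitrary unital ring, the anti-automorphism claim must be replaced by this splitting argument.
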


\begin{proof}
Let $x = \sum_{j=1}^m \lambda_j
\alpha_j \beta_j^* \in Z(u L_R(E) u)$,
where $\lambda_j \in R$,
$\alpha_j,\beta_j \in E^*$
and $s(\alpha_j)=s(\beta_j)=u$
for $j \in \{1,\ldots,m\}$. Take $r \in R$.
Then, 
$0 = xru-ru x = 
\sum_{j=1}^m
(\lambda_j r - r \lambda_j) \alpha_j
\beta_j^*$. Therefore, $0 = \LL(0) = 
\sum_{j=1}^m (\lambda_j r - r \lambda_j) \LL(\alpha_j \beta_j^*) = 
\sum_{j=1}^m ( \lambda_j r - r \lambda_j )
\beta_j \alpha_j^* = \LL(x) ru - ru \LL(x)$.
Thus, $\LL(x) ru = ru \LL(x)$. Take 
$\gamma,\delta \in E^*$ with $s(\gamma) =
s(\delta) = u$. Then, $0 = x \gamma \delta^*
- \gamma \delta^* x = 
\sum_{j=1}^m \lambda_j (\alpha_j \beta_j^*
\gamma \delta^* - \gamma \delta^* \alpha_j
\beta_j^*)$. Therefore, $0 = \LL(0) = 
\sum_{j=1}^m \lambda_j \LL(\alpha_j \beta_j^*
\gamma \delta^* - \gamma \delta^* \alpha_j
\beta_j^*) = 
\sum_{j=1}^m \lambda_j (\delta\gamma^*
\beta_j\alpha_j^* - \beta_j\alpha_j^*
\delta\gamma^*) = \delta\gamma^* \LL(x) -
\LL(x) \delta\gamma^*$. Thus, $\LL(x)\delta\gamma^*
= \delta\gamma^* \LL(x)$. Finally, 
$\LL(x) r \delta\gamma^* = \LL(x) ru \delta\gamma^*
= ru \LL(x) \delta\gamma^* = ru \delta\gamma^*
\LL(x) = r \delta\gamma^* \LL(x)$. 
This shows that $\LL(x) \in Z(u L_R(E) u)$.
\end{proof}

\begin{lemma}\label{lem:Zinvariant}
Suppose that $u,v \in E^0$ and 
that $\alpha \in E^*$ 
is such that 
$s(\alpha) = u$
and $r(\alpha) = v$. If $x \in Z(u L_R(E) u)$,
then $\alpha^* x \alpha \in Z(v L_R(E) v)$.
\end{lemma}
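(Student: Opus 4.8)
The plan is to verify directly that $\alpha^* x \alpha$ satisfies both requirements of membership in $Z(v L_R(E) v)$: that it lies in the corner $v L_R(E) v$ and that it commutes with every element thereof. First I would record a few structural identities for $\alpha$. Since $s(\alpha) = u$ and $r(\alpha) = v$, repeated application of relation~(2) of Definition~\ref{def:LPA} gives $u\alpha = \alpha v = \alpha$ and, dually, $v\alpha^* = \alpha^* u = \alpha^*$. Telescoping relation~(3) along the edges of $\alpha$ yields the crucial identity $\alpha^*\alpha = r(\alpha) = v$. From $v\alpha^* = \alpha^*$ and $\alpha v = \alpha$ it follows at once that $v(\alpha^* x \alpha) = \alpha^* x \alpha = (\alpha^* x \alpha)v$, so $\alpha^* x \alpha \in v L_R(E) v$.

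The heart of the argument is the observation that conjugation by $\alpha$ transports the corner at $v$ into the corner at $u$. Explicitly, for an arbitrary $w \in v L_R(E) v$ the element $\alpha w \alpha^*$ satisfies $u(\alpha w \alpha^*) = \alpha w \alpha^* = (\alpha w \alpha^*)u$, using $u\alpha = \alpha$ and $\alpha^* u = \alpha^*$; hence $\alpha w \alpha^* \in u L_R(E) u$. Since $x \in Z(u L_R(E) u)$, it commutes with $\alpha w \alpha^*$, and this is precisely the leverage needed to move $x$ past $w$.

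With these ingredients the computation is short. Writing $w = vw = wv$ and substituting $v = \alpha^*\alpha$, I would compute
\[
(\alpha^* x \alpha) w = \alpha^* x \alpha w (\alpha^*\alpha) = \alpha^* x (\alpha w \alpha^*)\alpha = \alpha^* (\alpha w \alpha^*) x \alpha = (\alpha^*\alpha) w \alpha^* x \alpha = w(\alpha^* x \alpha),
\]
where the central equality is the commutation of $x$ with $\alpha w \alpha^* \in u L_R(E) u$, and the remaining equalities use $\alpha^*\alpha = v$ together with $vw = wv = w$. As $w \in v L_R(E) v$ was arbitrary, this shows $\alpha^* x \alpha \in Z(v L_R(E) v)$.

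I expect the only delicate point to be the bookkeeping with the source and range relations—in particular confirming the telescoped identity $\alpha^*\alpha = v$ and checking that $\alpha w \alpha^*$ genuinely lands in $u L_R(E) u$—rather than any conceptual obstacle. Once the map $w \mapsto \alpha w \alpha^*$ is recognized as sending $v L_R(E) v$ into $u L_R(E) u$, the centrality of $x$ does all the real work.
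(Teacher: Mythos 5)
Your proof is correct and follows essentially the same route as the paper's: both rest on the observations that $\alpha^*\alpha = v$ and that $\alpha w \alpha^* \in u L_R(E) u$ for $w \in v L_R(E) v$, and then use centrality of $x$ in $u L_R(E) u$ in an identical telescoping computation (the paper simply writes it starting from $w(\alpha^* x \alpha)$ rather than $(\alpha^* x \alpha)w$). Your extra care in verifying $\alpha^* x \alpha \in v L_R(E) v$ is a minor addition the paper leaves implicit.
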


\begin{proof}
Let $x \in Z(u L_R(E) u)$.
Take $y \in v L_R(E) v$. Since $\alpha y \alpha^* \in u L_R(E) u$,
it follows that
$y \alpha^* x \alpha = v y \alpha^* x \alpha 
= \alpha^* \alpha y \alpha^* x \alpha
= \alpha^* x \alpha y \alpha^* \alpha 
= \alpha^* x \alpha y v
= \alpha^* x \alpha y$. 
Thus, $\alpha^* x \alpha \in Z(v L_R(E) v)$.
\end{proof}

\begin{defn}[cf.~\cite{Tomforde2011}]
Let $E=(E^0,E^1,r,s)$ be a directed graph.
A \emph{cycle} in $E$ is a path $\mu \in E^*\setminus E^0$ 
such that $s(\mu)=r(\mu)$.
An edge $f \in E^1$ is said to be an \emph{exit} for the 
cycle
$\mu=\mu_1 \ldots \mu_n$ if,
for some $i \in \{1, 2, \ldots, n\}$, we have $s(f)=s(\mu_i)$ but $f \neq \mu_i$.
\end{defn}

\begin{remark}
The definition of a \emph{cycle} in a directed graph varies in the literature on Leavitt path algebras.
In contrast to the most common definition of a cycle (cf.~\cite[p.~320]{AbramsAP2005}, following \cite{Tomforde2011} we allow a cycle to ``intersect'' itself.
In Theorem~\ref{thm:main}, the condition that ``every cycle in $E$ has an exit'' appears. That condition is commonly known as \emph{Condition (L)}.
It is easy to see that Condition (L) is satisfied with the first definition of a cycle \cite{AbramsAP2005}, if and only if it is satisfied with the second definition of a cycle \cite{Tomforde2011}.
\end{remark}

\begin{lemma}\label{lem:independent}
Every element in 
$E^0 \cup E^1 \cup (E^1)^*$ is nonzero
in $L_R(E)$, and
the set  of real (resp. ghost) paths
is linearly independent in the left $R$-module
$L_R(E)$ and in the right $R$-module $L_R(E)$.
\end{lemma}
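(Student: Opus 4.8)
The plan is to construct the canonical representation of $L_R(E)$ on a path module and read off everything from it. I begin with two reductions. By Definition~\ref{def:LPA} the elements of $R$ commute with all generators, hence with every real or ghost path; thus $\lambda\mu=\mu\lambda$ for $\lambda\in R$ and any path $\mu$, so a combination $\sum_j \lambda_j\mu_j$ equals $\sum_j \mu_j\lambda_j$ term by term. Consequently the asserted left and right linear independence are literally the same statement, and it suffices to treat, say, right coefficients. Moreover, the additive isomorphism $\LL$ of Definition~\ref{def:star} sends a ghost path $\beta^*$ to the real path $\beta$ and commutes with multiplication by scalars, so the ghost-path statement follows from the real-path statement. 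Everything therefore reduces to showing that each generator in $E^0\cup E^1\cup(E^1)^*$ is nonzero and that the real paths are right $R$-linearly independent.

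To obtain this I would represent $L_R(E)$ on a module built from \emph{boundary paths}. Call $v$ \emph{regular} if $0<|s^{-1}(v)|<\infty$, and call a \emph{boundary path} either an infinite path $x_1x_2\cdots$ or a finite path whose range is a sink or an infinite emitter. Let $M$ be the free right $R$-module with basis the set of boundary paths, and define operators (extended right-$R$-linearly): $v$ sends a basis element $x$ to $x$ if $s(x)=v$ and to $0$ otherwise; $f$ prepends $f$ when $r(f)$ equals the source and gives $0$ otherwise; $f^*$ deletes an initial $f$ when present and gives $0$ otherwise; and $\lambda\in R$ acts on coefficients by $x\cdot r\mapsto x\cdot(\lambda r)$. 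A routine check gives relations (1)--(3) and shows that $\lambda$ commutes with the generator-operators. The only delicate relation is (4): for regular $v$ one needs $\sum_{s(f)=v}ff^*=v$ on $M$, and this holds precisely because a regular vertex is never a sink or an infinite emitter, so every boundary path with source $v$ has length $\geq 1$ and begins with a unique edge out of $v$. This yields a ring homomorphism $\pi\colon L_R(E)\to\End(M)$. Since every vertex $w$ is the source of some boundary path (extend greedily until a sink or infinite emitter is reached, or forever), $\pi(w)\neq 0$; evaluating $\pi(f)$ and $\pi(f^*)$ on a boundary path from $r(f)$ shows these are nonzero too.

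For independence of the real paths I would use the $\Z$-grading of Remark~\ref{rem:Zgradation}. Each real path $\alpha$ is homogeneous of degree $|\alpha|$ and scalars lie in degree $0$, so any vanishing combination $\sum_j\alpha_j\lambda_j=0$ splits into homogeneous components; hence I may assume all $\alpha_j$ share a common length $n$. Now fix $w\in E^0$ and a boundary path $x_w$ with $s(x_w)=w$, and apply $\pi\bigl(\sum_j\alpha_j\lambda_j\bigr)=0$ to $x_w$. Only the $\alpha_j$ with $r(\alpha_j)=w$ contribute, each producing the boundary path $\alpha_j x_w$ with coefficient $\lambda_j$; and since all $\alpha_j$ have length $n$, distinct $\alpha_j$ yield distinct basis vectors $\alpha_j x_w$ (their length-$n$ initial segments recover the $\alpha_j$). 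Freeness of $M$ forces $\lambda_j=0$ for every $j$ with $r(\alpha_j)=w$, and letting $w$ range over $E^0$ kills all coefficients.

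The main obstacle is obtaining a \emph{well-defined} representation, that is, honoring relation~(4): the naive module on \emph{all} finite paths fails, since $\sum_{s(f)=v}ff^*$ and $v$ then disagree on the length-zero path at a regular vertex $v$. Passing to boundary paths removes exactly those offending vectors. A subtler point is that this representation is not faithful on cyclic parts of $E$ (for instance a single loop), so distinct real paths of \emph{different} lengths may act identically on $x_w$; the grading reduction to a fixed length $n$ is what circumvents this. Alternatively, one may simply invoke the normal-form description of $L_R(E)$ from \cite{H2013A,Tomforde2011}.
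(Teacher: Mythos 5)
Your proof is correct, and it takes a genuinely different route from the paper's. The paper proves Lemma~\ref{lem:independent} essentially by citation: it observes that Tomforde's proofs of \cite[Prop.~4.9]{Tomforde2011} (linear independence) and \cite[Prop.~3.4]{Tomforde2011} (nonvanishing, established there by constructing a representation on a free module built from $Z=\oplus_{\N}R$ with shift-type operators) carry over verbatim to noncommutative $R$, the only genuine addition being the replacement of $\N$ by a cardinal $\aleph \geq \text{card}(E^0\cup E^1)$ to handle uncountable graphs. You instead build a self-contained representation on the boundary-path module and read off both nonvanishing and independence by evaluating on basis vectors, after using the $\Z$-grading of Remark~\ref{rem:Zgradation} to reduce to paths of a common length. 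Your preliminary reductions are sound (left and right independence agree because $R$ commutes with paths; the ghost case follows from the real case via $\LL$, which is non-circular since Definition~\ref{def:star} precedes the lemma in the paper), your verification of relation (4) of Definition~\ref{def:LPA} correctly isolates the one delicate point (a regular vertex is never itself a boundary path, so every basis vector with source $v$ begins with a unique edge of $s^{-1}(v)$), and your reduction to a fixed length $n$ correctly repairs the failure of faithfulness on graphs with cycles. What your route buys: it is cardinality-blind --- the basis is simply the set of boundary paths, whatever its cardinality, so the $\aleph$-adaptation that the paper must graft onto Tomforde's construction becomes unnecessary --- and it is self-contained, making visible exactly where each relation enters. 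What the paper's route buys is brevity and the reuse of results from \cite{Tomforde2011} that it invokes elsewhere anyway. One small streamlining of your argument: the same module handles ghost paths directly (evaluate $\sum_j \lambda_j\beta_j^*$, with all $\beta_j$ of length $n$, on basis vectors of the form $\beta_k y$), so the appeal to $\LL$ could be dropped altogether.
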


\begin{proof}
The proof of
\cite[Prop.~4.9]{Tomforde2011} immediately carries over to
the case where $R$ is a noncommutative unital ring.
The same holds for the proof of 
\cite[Prop.~3.4]{Tomforde2011}
in case $E^0$ and $E^1$ are countable sets.
Otherwise, the proof may be adapted by taking $\aleph$ to be an infinite cardinal at least as large as $\text{card}(E^0 \cup E^1)$ and defining $Z:=\oplus_{\aleph} R$ (with the notation of \cite[Prop.~3.4]{Tomforde2011}).
\end{proof}

\begin{remark}
Let $x$ be a nonzero element of $L_R(E)$.
It is clear from the definition of $L_R(E)$ 
that $x$ can be
represented as a finite sum 
$x = \sum_{i=1}^n r_i \alpha_i
\beta_i^*$ where $r_i \in R \setminus \{ 0 \}$
and $\alpha_i,\beta_i \in E^*$.
Following \cite[Def.~4.8]{Tomforde2011},
we define the \emph{real degree} (resp. \emph{ghost degree}) of this representation
as $\max \{ \deg(\alpha_i)
\mid 1 \leq i \leq n \}$ 
(resp. $\max \{ \deg(\beta_i) \mid 
1 \leq i \leq n \}$). Note that, in general,
the real degree and ghost degree of $x$ depend on the 
particular choice of representation.
If, however, $x$ has a representation in
only real (resp. ghost) edges, that is if 
$x = \sum_{i=1}^n r_i \alpha_i$ 
(resp. $x = \sum_{i=1}^n r_i \beta_i^*$), 
then, by Lemma~\ref{lem:independent}, the real (resp. ghost) degree is 
independent of the choice of 
representation of $x$ in real (resp. ghost)
edges.
\end{remark}

\begin{prop}\label{prop:center}
Every cycle in $E$ has an exit 
if and only if for every $u \in E^0$ the inclusion 
$Z(u L_R(E) u) \subseteq (u L_R(E) u)_0$ holds.
\end{prop}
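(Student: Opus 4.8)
The statement is a biconditional, and I would prove the two directions separately. Throughout I would fix a vertex $u \in E^0$ and work inside the corner subring $u L_R(E) u$, using the fact that this corner inherits a $\Z$-grading with $(u L_R(E) u)_0 = u (L_R(E))_0 u$.

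First I would tackle the contrapositive of the harder direction, namely: if some cycle $\mu$ in $E$ has \emph{no} exit, then for $u := s(\mu) = r(\mu)$ there is a central element of $u L_R(E) u$ of nonzero degree, which violates the inclusion $Z(u L_R(E) u) \subseteq (u L_R(E) u)_0$. The natural candidate is the element $\mu$ itself (of degree $|\mu| > 0$), or possibly $\mu + \mu^*$. The key structural input is that ``no exit'' forces the local structure around the cycle to be extremely rigid: at each vertex $s(\mu_i)$ the cycle edge $\mu_i$ is the \emph{only} edge with that source, so relation~(4) in Definition~\ref{def:LPA} gives $\mu_i \mu_i^* = s(\mu_i)$. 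I would use this repeatedly to show that $\mu$ commutes with every homogeneous generator appearing in $u L_R(E) u$ (real edges, ghost edges, and vertices supported along the cycle), so that $\mu \in Z(u L_R(E) u)$ with $\mu \notin (u L_R(E) u)_0$. I expect this verification — checking commutativity of $\mu$ against all the relevant paths $\alpha\beta^*$ in the corner — to be the main obstacle, since one must carefully use the no-exit condition to cancel terms and confirm there are no ``stray'' edges that $\mu$ fails to commute past.

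For the converse direction I would assume every cycle has an exit and prove the inclusion $Z(u L_R(E) u) \subseteq (u L_R(E) u)_0$. Here Lemma~\ref{lem:star} is the crucial tool: it tells me $Z(u L_R(E) u)$ is a $\Z$-graded group with $\LL\bigl((Z(u L_R(E) u))_N\bigr) = (Z(u L_R(E) u))_{-N}$, so it suffices to show that $(Z(u L_R(E) u))_N = \{0\}$ for every $N > 0$; the negative degrees then vanish by applying $\LL$. I would take a nonzero homogeneous central element $x$ of degree $N > 0$, represent it as a finite sum $\sum_i r_i \alpha_i \beta_i^*$ with $\deg(\alpha_i) - \deg(\beta_i) = N$, and derive a contradiction. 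The idea is that centrality forces a strong periodicity/self-similarity on $x$: commuting $x$ with suitable paths $\gamma\gamma^*$ (projections onto subpaths) and with edges should show that the support of $x$ concentrates along a cycle through $u$, and then the exit for that cycle produces an edge $f$ for which $x f \ne f x$, contradicting centrality.

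The principal obstacle, in both directions, is controlling the combinatorics of paths under the Leavitt relations — especially relation~(3), $f^* f' = \delta_{f,f'} r(f)$, and relation~(4). For the forward (no-exit) direction I would reduce everything to the rigid chain of identities $\mu_i \mu_i^* = s(\mu_i)$; for the converse I would lean on Lemma~\ref{lem:independent} to extract well-defined real and ghost degrees and on Lemma~\ref{lem:Zinvariant} to transport central elements along paths between vertices, allowing me to assume without loss of generality that $x$ lives at a vertex lying on a cycle. The remaining work is then the careful bookkeeping needed to locate the exit edge and exhibit the failure of commutativity.
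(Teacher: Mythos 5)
Your first direction (a cycle without an exit yields a central element of nonzero degree) is essentially the paper's own argument: the paper also takes the cycle $p$ itself, based at $u=s(p)$, and uses the no-exit rigidity to show that every spanning element of $uL_R(E)u$ has the form $r\alpha\beta^*$ with $\alpha=p^m\gamma$, $\beta=p^n\gamma$ for a \emph{common} initial segment $\gamma$ of the cycle; a short computation with $\gamma\gamma^*=u=pp^*$ then gives $p\,r\alpha\beta^* = r\alpha\beta^*\,p$. This factorization of arbitrary corner paths through powers of $p$ is the precise form of the rigidity you anticipate, and your plan would carry through.

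The converse direction, however, has a genuine gap. Your reduction via Lemma~\ref{lem:star} to homogeneous central elements of one sign of degree is fine (the paper does the mirror image, working with negative degrees and real-degree minimality), and your intended endpoint --- a central element supported on cycle paths at $u$, contradicted by the exit --- matches the paper's. But the middle step, that ``centrality forces the support of $x$ to concentrate along a cycle through $u$,'' is exactly the hard part of the proof, and you offer no mechanism for it beyond the hope that commuting with projections $\gamma\gamma^*$ will do it. The paper's mechanism, adapted from Abrams--Aranda Pino, is a well-founded minimality argument: one forms the set $M$ of \emph{all} pairs $(v,y)$ with $y$ a nonzero central homogeneous element of degree $N$ at \emph{any} vertex $v$, orders $M$ by minimal real degree of representations, picks a minimal pair $(u,x)$, writes $x=\sum_{i} e_i a_i + b$ (distinct edges $e_i$, each $a_i$ of smaller real degree, $b$ a pure ghost polynomial), kills the $a_i$ via $e_i^* x e_i = 0$ (minimality plus Lemma~\ref{lem:Zinvariant}), and then --- crucially --- may have to replace $(u,x)$ by $(r(f), f^*xf)$ at a \emph{different} vertex before concluding that $x$ is a pure ghost polynomial in cycles at that vertex. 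Note that this replacement step is incompatible with your opening move ``throughout I would fix a vertex $u$'': the induction must run over all vertices simultaneously, precisely because transporting a central element by Lemma~\ref{lem:Zinvariant} changes the base vertex. Without this (or some substitute) argument, your periodicity claim is an unproven assertion, and the proof of this direction is incomplete.
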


\begin{proof}
First we show the ``if'' statement by contrapositivity.
Suppose that 
there is a cycle 
$p \in E^* \setminus E^0$ 
without any exit. 
Set $u:=s(p)$ and write $p^0:=u$.
Take $r \in R$ and $\alpha,\beta \in E^*$ with 
$s(\alpha) = s(\beta) = u$
and $r(\alpha)=r(\beta)$.
Since $p$ has no exit, there are $m,n \in \N \cup \{0\}$
and $\gamma \in E^*$ such that $\alpha = p^m \gamma$
and $\beta = p^n \gamma$. 
Note that $\gamma\gamma^*=u=pp^*$.
We get that 
$p r\alpha \beta^* = p r p^m \gamma \gamma^* (p^*)^n =
r p^{m+1} (p^*)^n$ 
and 
$r\alpha \beta^* p = r p^m \gamma \gamma^* (p^*)^n p =
r p^m (p^*)^n p$. 
If $n=0$, then $p^{m+1}(p^*)^n=p^{m+1}= 
p^m (p^*)^n p$, and if $n>0$, then
$p^{m+1}(p^*)^n = p^m pp^* (p^*)^{n-1} = p^m (p^*)^{n-1} p^*p = 
p^m (p^*)^{n} p$.
In either case, we get that
$p r\alpha \beta^* = r\alpha \beta^* p$. 
Thus, $p \in Z(u L_R(E) u) \setminus (u L_R(E) u)_0$.

Now we show the ``only if'' statement.
Suppose that every cycle in $E$
has an exit. 
Take $u \in E^0$.
We wish to show that $Z(u L_R(E) u) \subseteq (u L_R(E) u)_0$. 
By Lemma~\ref{lem:pregraded}(a)
and Lemma \ref{lem:star}, it is enough to show that 
$(Z(u L_R(E) u))_N = \{ 0 \}$ for every negative integer $N$.

We now adapt parts of the proof of
\cite[Thm.~3.1]{AbramsAP2008} to our 
situation. Take $N < 0$.
Seeking a contradiction, suppose that 
the set
\[ M:= \{ (u,x) \mid u \in E^0 \text{ and } 
\
x \in (Z(uL_R(E)u))_N \setminus \{ 0 \} \} \]
is nonempty. 
If $(u,x),(v,y) \in M$, then we write
$(u,x) \leq (v,y)$ if $x$ has a representation
in $L_R(E)$ of real degree less than or
equal to all real degrees of representations
of $y$ in $L_R(E)$. We write $(u,x) = (v,y)$
whenever $(u,x) \leq (v,y)$ and 
$(v,y) \leq (u,x)$. Clearly, clearly,
$\leq$ is a total order on $M$ which
therefore has a minimal element $(u,x)$.
Choose a minimizing representation
$x = \sum_{i=1}^n e_i a_i + b$
where $e_1,\ldots,e_n \in E^1$ are all distinct, 
each $a_i \in L_R(E)$ is either zero,
or nonzero and 
representable as an element of smaller 
real degree than that of $x$,
and $b$
is a polynomial (possibly zero) 
in only ghost paths whose source and range equals $u$. 
Take $i\in \{1,\ldots,n\}$.
Write $v_i := r(e_i)$.
By Lemma~\ref{lem:Zinvariant}, 
$e_i^* x e_i \in (Z(v_i L_R(E) v_i))_N$. Since
$e_i^* x e_i$ is of 
smaller real
degree 
than 
that of $x$, it 
follows that 
$e_i^* x e_i = 0$. 
Using that 
$x \in (Z(u L_R(E) u))_N$, it follows that
$e_i^* x = e_i^* e_i e_i^* x = e_i^* x e_i e_i^* = 0$.
Thus, 
$0 = e_i^* x = a_i + e_i^* b$
and hence
$a_i = -e_i^* b$.

Now, $0 \neq x = (u - \sum_{i=1}^n e_i e_i^*)b$. Thus, $u \neq \sum_{i=1}^n e_i e_i^*$ and $b\neq 0$.
This implies that there 
is some $f \in E^1 \setminus \{e_1,\ldots,e_n\}$ with $s(f) = u$. 
Furthermore, $f^* x = f^* b$,
and, by Lemma~\ref{lem:independent}, $f^* b \neq 0$ since it is a sum of distinct ghost paths.
Write $v:=r(f)$. By Lemma~\ref{lem:Zinvariant},
it follows that $f^* x f \in (Z(v L_R(E) v))_N$.
Using that $0 \neq f^* x = f^* f f^* x = f^* x f f^*$, we get that $f^* x f \neq 0$.
Note that the real degree of $f^* x f$ is less or equal to the real degree of $x$.
Hence, by the assumption made on $(u,x)$, 
and possibly after replacing
$(u,x)$ by $(v,f^*xf)$, we may assume that $a_i=0$ for every $i\in \{1,\ldots,n\}$. 
Therefore, suppose that $x = \sum_{j=1}^m r_j \beta_j^*$ for some 
nonzero $r_j \in R$ and some distinct paths 
$\beta_j \in E^{-N}$ with $s(\beta_j)=r(\beta_j) = u$.
Take $k \in \{ 1,\ldots,m \}$. 
By Lemma~\ref{lem:Zinvariant} it follows that
$r_k \beta_k^* = 
\beta_k^* x \beta_k 
\in Z(u L_R(E) u)$.
By assumption, the cycle $\beta_k$ has an
exit at some $w \in E^0$. Thus, there are
$\gamma,\delta \in E^*$ and $\epsilon \in E^1$ such that 
$\beta_k = \gamma \delta$, $r(\gamma) = s(\epsilon)$
and $\epsilon^* \delta = 0$.
By Lemma~\ref{lem:Zinvariant}, it follows 
that $r_k (\delta \gamma)^*
= r_k \gamma^* \delta^* \gamma^*\gamma
= \gamma^* r_k \beta_k^* \gamma
\in Z(v L_R(E) v)$.
We now reach a contradiction, because 
$0 \neq  \epsilon \epsilon^* r_k (\delta \gamma)^* = 
r_k (\delta \gamma)^* \epsilon \epsilon^* = 0$.
\end{proof}

Now, we prove our main result.

\begin{proof}[Proof of Theorem~\ref{thm:main}]
First we show the ``only if'' statement.
Suppose that $L_R(E)$ is simple. Then
$L_R(E)$ is graded simple and hence, by
Proposition~\ref{prop:GradedSimple}, it
follows that $R$ is simple and that
$E^0$ has no nontrivial hereditary and saturated subset.
Furthermore, Proposition~\ref{prop:simplicitylocal} implies that
$uL_R(E)u$ is simple for every $u \in E^0$, and hence,
by Proposition~\ref{prop:simplicity},  $Z(uL_R(E)u) \subseteq (uL_R(E)u)_0$
for every $u \in E^0$. Thus, by Proposition~\ref{prop:center},  every
cycle in $E$ has an exit.

Now we show the ``if'' statement.
Suppose that $R$ is simple, $E^0$ has no
nontrivial hereditary and saturated 
subset, and every cycle in $E$ has an exit.
By Proposition~\ref{prop:GradedSimple}, $L_R(E)$ is graded simple. 
Take $u \in E^0$.
It follows from Proposition~\ref{prop:center} that
$Z(uL_R(E)u) \subseteq (uL_R(E)u)_0$.
Furthermore, by Proposition~\ref{prop:simplicitylocal}, $uL_R(E)u$ is graded simple. Thus, by
Proposition~\ref{prop:simplicity} we get
that $uL_R(E)u$ is simple. Hence, by
Proposition~\ref{prop:simplicitylocalagain},
$L_R(E)$ is simple.
\end{proof}

\section{The center of a simple Leavitt path algebra}

In this section we prove Theorem~\ref{thm:center} using results from the previous sections together with some auxiliary observations.

\begin{remark}
Let $E=(E^0,E^1,r,s)$ be a directed graph.

(a)
Take $v \in E^0$. 
We write $w \leq v$, 
for $w \in E^0$,
if there is $\mu \in E^*$ with $s(\mu)=v$ and
$r(\mu)=w$.
 The set $T(v) := 
\{ w \in E^0 \mid w \leq v \}$ is the 
smallest hereditary subset of $E^0$
containing $v$. 

(b)
Suppose that $X \subseteq E^0$.
Put $T(X) := \cup_{x \in X} T(x)$.
The \emph{hereditary saturated closure}
$\overline{X}$ of $X$
is defined as the smallest hereditary 
and saturated subset of $E^0$ containing
$X$. One can show (see 
\cite[p.~626]{APCrow2011} and the references
therein) that
$\overline{X} = \cup_{n=0}^{\infty} X_n$
where $X_0 := T(X)$ and, for $n \geq 1$,
$X_n := \{ y \in E^0 \mid 0 < |s^{-1}(y)| <
\infty \ \mbox{and} \ 
r( s^{-1}(y) ) \subseteq X_{n-1} \} \cup
X_{n-1}$.
\end{remark}

The following result can be proved by induction (see \cite[Prop.~14.11]{primeness}) and
\cite[Lem.~5.2]{Tomforde2011}).

\begin{prop}\label{prop:tomforde}
Suppose that $R$ is an associative unital ring 
and that $E=(E^0,E^1,r,s)$ is a directed graph.
If $a \in (L_R(E))_0$ is nonzero, then there exist $\alpha, \beta \in E^*$, $v \in E^0$ and a nonzero
$k \in R$ such that $\alpha^* a \beta = kv$.
\end{prop}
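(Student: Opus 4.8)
The plan is to argue by induction on the \emph{real degree} of $a$. First I would use the $\Z$-grading to put $a$ into a convenient normal form. Since $a\in (L_R(E))_0$, any representation $a=\sum_{i=1}^n r_i\alpha_i\beta_i^*$ with $r_i\in R\setminus\{0\}$ and $\alpha_i,\beta_i\in E^*$ may be assumed to consist only of terms of degree zero, that is, with $|\alpha_i|=|\beta_i|$ and (so that the term is nonzero) $r(\alpha_i)=r(\beta_i)$; by Lemma~\ref{lem:independent} I may further assume the pairs $(\alpha_i,\beta_i)$ to be distinct. Put $N:=\max_i|\alpha_i|$. The base case $N=0$ is immediate: then $a$ is an $R$-combination of vertices, and a single two-sided vertex multiplication isolates a nonzero summand of the form $kv$, which is exactly what is wanted.

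For the inductive step, suppose $N\geq 1$ and choose a term $r_1\alpha_1\beta_1^*$ with $|\alpha_1|=N$. Let $e,f\in E^1$ be the initial edges of $\alpha_1$ and $\beta_1$ (these exist since $N\geq 1$), and pass from $a$ to $e^*af$. A direct computation, using the relation $f^*f'=\delta_{f,f'}r(f)$ from Definition~\ref{def:LPA} together with the degree-zero condition $|\alpha_i|=|\beta_i|$, shows that all the ``mixed'' contributions vanish: the only surviving terms come from indices $i$ with $\alpha_i=e\alpha_i'$ and $\beta_i=f\beta_i'$, yielding $r_i\alpha_i'(\beta_i')^*$ of real degree $|\alpha_i|-1$, and, when $e=f$, from the length-zero terms $\alpha_i=\beta_i=s(e)$, yielding $r_i\,r(e)$. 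Hence $e^*af\in (r(e)L_R(E)r(f))_0$ is again in normal form but of real degree at most $N-1$, and the reducing edges compose into genuine paths, since $e_k^*\cdots e_1^*\,a\,f_1\cdots f_k=(e_1\cdots e_k)^*\,a\,(f_1\cdots f_k)=\alpha^* a\beta$. Applying the induction hypothesis to $e^*af$ then completes the reduction.

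The step I expect to be the main obstacle is verifying that the reduced element is \emph{nonzero}, so that the induction can actually proceed. When $N\geq 2$ the image $r_1\alpha_1'(\beta_1')^*$ of the chosen maximal term survives as a term of top real degree $N-1$, which cannot be cancelled by the vertex terms $r_i\,r(e)$, and its non-vanishing then follows from the linear independence supplied by Lemma~\ref{lem:independent}. The genuinely delicate situation is the bottom of the induction, where $e^*af$ becomes a pure $R$-combination of vertices and the relation $\sum_{s(f)=v}ff^*=v$ can force cancellation; for instance $ee^*-s(e)$ is a nonzero degree-zero element precisely when $s(e)$ emits an edge other than $e$ (or is an infinite emitter), yet $e^*(ee^*-s(e))e=0$, so one is forced to reduce along such another edge. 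Guaranteeing that a degree-lowering multiplier keeping the element nonzero always exists --- which ultimately rests on $a\neq 0$ together with Lemma~\ref{lem:independent} --- is the crux, and it is exactly the bookkeeping carried out in the inductions of \cite[Prop.~14.11]{primeness} and \cite[Lem.~5.2]{Tomforde2011}.
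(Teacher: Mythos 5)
Your skeleton --- rewriting $a$ as a sum of degree-zero monomials, inducting on the real degree, reducing via $a \mapsto e^*af$ along initial edges of a top term, and composing the reducing edges into the final paths $\alpha,\beta$ --- is the right one, and it is essentially the strategy of the sources the paper itself cites for this proposition. But your proposal has a genuine gap at exactly the point you yourself call the crux, and the one concrete claim you make there is false. You assert that for $N\geq 2$ the image $r_1\alpha_1'(\beta_1')^*$ of the chosen maximal term survives, its non-vanishing following from Lemma~\ref{lem:independent}. That lemma only gives linear independence of \emph{pure real} paths and of \emph{pure ghost} paths; it says nothing about mixed monomials $\alpha\beta^*$, and these are genuinely \emph{not} linearly independent, because relation (4) of Definition~\ref{def:LPA} rewrites monomials across real degrees. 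Concretely, take vertices $u,v,w$, two distinct edges $e,g$ from $u$ to $v$, and a single edge $h$ from $v$ to $w$, so that $hh^*=v$. Let $a=r(eh)(eh)^*-ree^*+rgg^*$ with $r\neq 0$. This is in your normal form (degree-zero terms, distinct pairs, $N=2$) and is nonzero, since $(eh)(eh)^*=ehh^*e^*=ee^*$ forces $a=rgg^*\neq 0$; yet your recipe reduces along the unique top term $(eh)(eh)^*$ and yields $e^*ae=rhh^*-rv=0$. So top-degree survival fails already at $N=2$, by exactly the cancellation mechanism you acknowledge only at the bottom of the induction.

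Consequently the inductive step is unproven: what is needed is a proof that \emph{some} degree-lowering reduction keeps the element nonzero. This can be arranged, for instance, by inducting on the minimal real degree over all representations of $a$ (in the example above the minimal representation $rgg^*$ has degree $1$ and reduces to $g^*ag=rv\neq 0$), or by exploiting the structure of $(L_R(E))_0$ as a direct limit of finite sums of matrix rings over $R$, in which monomials rewritten at a common path length do form an independent spanning set. That bookkeeping is the actual mathematical content of \cite[Prop.~14.11]{primeness} and \cite[Lem.~5.2]{Tomforde2011}, and your proposal does not carry it out: it defers precisely this step to those references, which makes your argument no more complete than the paper's own one-line citation, while the justification you do offer for the step ($N\geq 2$ via Lemma~\ref{lem:independent}) is incorrect. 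The base case and the computation of $e^*af$ are fine, but as a standalone proof the proposal fails.
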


Now, we prove our second main result.

\begin{proof}[Proof of Theorem~\ref{thm:center}]
Write $S := L_R(E)$. If $S$ is not unital,
then it follows immediately from
\cite[3.3]{Wisbauer1991} that $Z(S)=\{0\}$.
This proves (a). Now, we show (b).
Suppose that $S$ is unital, i.e. $E^0$ is finite.
Take a nonzero $x \in Z(S)$.
By Proposition~\ref{prop:simplicity}, 
it follows that $x \in S_0$. Therefore,
by Proposition~\ref{prop:tomforde}, 
there are $\alpha,\beta \in E^*$, 
$v \in E^0$ and a nonzero $k \in R$ 
such that $\alpha^* x \beta = kv$.
From this equality, the grading, and the fact that 
$x \in Z(S)$, it follows that $\alpha=\beta$ and $r(\alpha)=v$.
Hence,
$vx = \alpha^*\alpha x = \alpha^* x\alpha= 
\alpha^* x \beta = kv$.
Note that the equality $vx = kv$ 
implies that $k \in Z(R)$.
Put $X := \{ v \}$. Then $\overline{X}$ is a
nonempty
hereditary and saturated subset of $E^0$.
By Theorem~\ref{thm:main}, 
$\overline{X} = E^0$. 
We claim that this implies that
$wx = kw$ for every $w \in E^0$.
Let us assume, for a moment, that this
claim holds. Then $x = 1_S \cdot x = 
\sum_{w \in E^0} wx = \sum_{w \in E^0} kw =
k \cdot \sum_{w \in E^0} w = k \cdot 1_S
\in Z(R) \cdot 1_S$. Thus, $Z(S) \subseteq
Z(R) \cdot 1_S$. Clearly, $Z(R) \cdot 1_S
\subseteq Z(S)$ holds.

Now we show the claim. We will use induction
to prove that for every $n \geq 0$ the 
implication $w \in X_n \Rightarrow wx = kw$
holds. From this the claim follows.
Base case: $n=0$. 
Suppose that $w\in X_0$, i.e. $w \leq v$. Then there is 
a path $\delta$ from $v$ to $w$.
We get that $wx = \delta^* \delta x =
\delta^* v \delta x = \delta^* vx \delta =
\delta^* kv \delta = k \delta^* v \delta =
k \delta^* \delta = k w$.
Induction step:
Suppose that $wx = kw$ for every 
$w \in X_{n-1}$. Take $y \in X_n \setminus 
X_{n-1}$
and note that
$0 < |s^{-1}(y)| <
\infty$ and $r( s^{-1}(y) ) \subseteq X_{n-1}$.
We get that $yx = \sum_{e \in s^{-1}(y)}
e e^* x = \sum_{e \in s^{-1}(y)} e r(e) x e^*
= \sum_{e \in s^{-1}(y)} e k r(e) e^* =
k \sum_{e \in s^{-1}(y)} e e^* = ky$.
\end{proof}

\end{document}